\numberwithin{equation}{section}
\numberwithin{figure}{section}
  \theoremstyle{definition}
  \newtheorem{defn}{\protect\definitionname}[section]
  \theoremstyle{remark}
  \newtheorem*{rem*}{\protect\remarkname}
  \theoremstyle{plain}
  \newtheorem{prop}{\protect\propositionname}[section]
  \theoremstyle{plain}
  \newtheorem{thm}{\protect\theoremname}[section]
  \theoremstyle{plain}
  \newtheorem*{thm*}{\protect\theoremname}
  \theoremstyle{plain}
  \newtheorem{cor}{\protect\corollaryname}[section]
  \theoremstyle{definition}
  \providecommand{\definitionname}{Definition}
  \providecommand{\examplename}{Example}
  \providecommand{\propositionname}{Proposition}
  \providecommand{\remarkname}{Remark}
\providecommand{\corollaryname}{Corollary}
\providecommand{\theoremname}{Theorem}
\begin{document}
\global\long\def\Tor{\operatorname{Tor}}
\global\long\def\gr{\operatorname{gr}}
\global\long\def\tot{\operatorname{tot}}
\global\long\def\B{\mathrm{B}}

\title{The Bracket in the Bar Spectral Sequence for an Iterated Loop Space}
\author{Xianglong Ni}
\maketitle

\thispagestyle{empty}
\begin{abstract}
When $X$ is an associative H-space, the bar spectral sequence computes
the homology of the delooping, $H_{*}(BX)$. If $X$ is an $n$-fold
loop space for $n\geq2$ this is a spectral sequence of Hopf algebras.
Using machinery by Sugawara and Clark, we show that the spectral sequence
filtration respects the Browder bracket structure on $H_{*}(BX)$,
and so it is moreover a spectral sequence of Poisson algebras. Through
the bracket on the spectral sequence, we establish a connection between
the degree $n-1$ bracket on $H_{*}(X)$ and the degree $n-2$ bracket
on $H_{*}(BX)$. This generalizes a result of Browder and puts it
in a computational context.
\end{abstract}

\section{\label{sec:Introduction}Introduction}

For an associative H-space $G$, the Milnor-Dold-Lashof construction
builds the delooping $BG$. The algebraic analogue is the bar construction,
and this relationship gives rise to the bar spectral sequence (also
known as the Rothenberg-Steenrod spectral sequence) which computes $H_{*}(BG)$
from $H_{*}(G)$. The case where $G$ is an infinite
loop space has been extensively studied: working at a prime $p$, Ligaard and Madsen
\cite{DL in bar ss} showed that there is a Dyer-Lashof action on
the spectral sequence compatible with the ones on $H_{*}(G)$ and
$H_{*}(BG)$.


Here, we will instead consider spaces $\Omega^n X$ where $n$ is finite, and we will work over an arbitrary field.
$S^{n-1}$ parametrizes the multiplications on $\Omega^n X$, producing Browder's bracket in homology---this process is reviewed in \S\ref{subsec:Browder-bracket}. This bracket shifts degree by $n-1$ and is an obstruction to $\Omega^n X$ being an $(n+1)$-fold loop space. Likewise, $H_*(\Omega^{n-1} X)$ has a bracket of degree $n-2$.

We show that the bar construction of $H_*(\Omega^n X)$ receives a bracket as well, and then prove the following theorem:

\begin{thm*}
	The bar spectral sequence
	\[
		E^2_{*,*}\cong\Tor_{*,*}^{H_*(\Omega^n X)}(k,k)\Rightarrow H_*(\Omega^{n-1}X)
	\]
	is a spectral sequence of Poisson Hopf algebras. The bracket in the spectral sequence shifts bidegree by $(-1,n-1)$ and satisfies
	\[
		d^r[x,y] = [d^rx,y] + (-1)^{n+|x|}[x,d^r y].
	\]
	Moreover, the bracket on the $E^1_{*,*}$ page is the bracket in the bar construction of $H_*(\Omega^n X)$. 
\end{thm*}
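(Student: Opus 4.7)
The strategy is to lift the Browder bracket to a filtration-compatible operation on the Milnor--Dold--Lashof construction $B(\Omega^n X)\simeq\Omega^{n-1}X$, and then transport the consequences to each page of the spectral sequence.

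First, I would use the Sugawara--Clark framework to build a geometric pairing
\[
	B_p(\Omega^n X) \times B_q(\Omega^n X) \times S^{n-2} \longrightarrow B_{p+q-1}(\Omega^n X)
\]
whose adjoint realizes, after passing to the colimit, the Browder bracket on $\Omega^{n-1}X$ and, after passing to the associated graded, recovers the bracket on the bar construction of $H_*(\Omega^n X)$. The essential geometric input is that one of the two cube-swap coordinates defining the bracket on $\Omega^n X$ can be absorbed into the bar coordinate that glues two adjacent simplices, thereby lowering the filtration by $1$ and accounting for the bidegree shift $(-1,n-1)$. This is the main obstacle: one must verify that Sugawara's higher homotopies implementing homotopy commutativity on $\Omega^n X$ can be threaded through the MDL filtration in a strictly filtration-decreasing manner.

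Once the filtered pairing is in place, the Leibniz formula for $d^r$ is essentially formal. The bracket is chain-level on the filtered complex and therefore descends to every $E^r$; its total degree $n-1$ together with the filtration shift $-1$ produces the Koszul sign $(-1)^{n+|x|}$ appearing in $d^r[x,y]=[d^rx,y]+(-1)^{n+|x|}[x,d^ry]$. By the same reasoning, the Poisson axiom (biderivation of the bracket with respect to the Pontryagin product) and the Hopf compatibility (behavior with respect to the coproduct) pass from the filtered space to each page, yielding a spectral sequence of Poisson Hopf algebras.

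Finally, to identify the $E^1$ bracket with the bracket on $B(H_*(\Omega^n X))$, I would invoke the standard identification $E^1\cong B(H_*(\Omega^n X))$ and reduce the geometric pairing modulo higher filtration. By construction the resulting operation is computed entirely from the induced bracket on $H_*(\Omega^n X)$ and the simplicial face maps of the bar construction, so it coincides with the algebraic bar-construction bracket; it suffices to check this comparison in the universal case of two free generators equipped with the tautological Browder bracket.
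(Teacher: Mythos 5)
Your proposal captures the right high-level strategy---use the Sugawara--Clark deloopings to build a chain-level bracket on the filtered bar complex and then transport it to each page---but several of the key steps are either misattributed or left unaddressed, and at least one is incorrect as stated.

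The central gap is your explanation for why the bracket \emph{drops} filtration by $1$. You attribute this to ``absorbing one of the two cube-swap coordinates into the bar coordinate,'' but this is not what happens and there is no natural map
$B_p(\Omega^n X)\times B_q(\Omega^n X)\times S^{n-2}\to B_{p+q-1}(\Omega^n X)$
visible from that heuristic. What the paper actually does is loop the bracket map $\phi_{n-1}\colon\Omega^{n-1}X\times S^{n-2}\times\Omega^{n-1}X\to\Omega^{n-1}X$ to get a pointwise product $M_0$ on $\Omega^n X$ parametrized by $\Omega S^{n-2}$, deloop $M_0$ via Sugawara's higher homotopies $M_n$ following Clark, and package the resulting chain maps $h_n$ into a map
$J\otimes\tot\B_{*,*}(C_*(\Omega S^{n-2}))\otimes J\to J$, where $J=\tot\B_{*,*}(C_*(\Omega^n X))$.
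The bracket is this map evaluated on $[\xi]$ with $\xi\in C_{n-3}(\Omega S^{n-2})$ a generator; a priori it raises filtration by $1$ (since $[\xi]$ has external degree $1$). The filtration drop to $-1$ comes from a \emph{vanishing argument}: the $h_0$ contribution on $1\otimes\xi\otimes1$ is a degenerate chain, and likewise every $h_1$ contribution touching $1\otimes\xi\otimes1$ is degenerate, so the pieces in filtrations $p+q+1$ and $p+q$ die. (For $n=2$ the same shift comes from a completely different mechanism: the top-filtration piece of the product is the commutative shuffle product, which vanishes in the commutator.) Your proposal does not explain why anything vanishes, and without that the ``filtration-decreasing'' claim has no support.

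Two further points. First, the $E^1$ identification is not formal: the paper verifies it by a concrete geometric calculation (for $n\geq3$, assembling the six $\hat h_2$-contributions into a map $F\colon\Omega^n X\times S^{n-3}\times I^2\times\Omega^n X\to\Omega^n X$, collapsing to $\tilde F\colon\Omega^n X\times S^{n-1}\times\Omega^n X\to\Omega^n X$, and then applying an untwisting homotopy to land on $\phi_n$; for $n=2$ by explicitly assembling four $h_1$-terms into a cycle representing the generator of $H_1(S^1)$). Appealing to a ``universal case of two free generators'' does not substitute for this, since the issue is identifying a topologically-defined chain operation with an algebraic formula, not verifying an algebraic identity. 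Second, the paper treats $n=2$ separately from $n\geq 3$ precisely because $\xi$ lives in $C_{n-3}(\Omega S^{n-2})$, which only makes sense for $n\geq3$; your proposal is silent on this, even though the construction is genuinely different in the two cases.
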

The vertical shift of $n-1$ comes from the bracket on $H_*(\Omega^n X)$. Together with the horizontal shift of $-1$, the bracket has total degree $n-2$, matching the one on $H_*(\Omega^{n-1}X)$.

In \S\ref{sec:Background} we review the relevant definitions and constructions.
Then, in \S\ref{sec:combinatorics-bracket-bar-construction} we give a formula for the bracket on the bar construction and show that it is well-behaved. In \S\ref{sec:bracket-mainproof-section} we set up and prove the theorem stated above,
and we show that the bracket on $E^1_{*,*}$ of the spectral sequence is the one on $\B_{*,*}(H_*(\Omega^n X))$ as described in \S\ref{sec:combinatorics-bracket-bar-construction}. We obtain
Theorem 2-1 of \cite{browder bracket} as a special case. For technical reasons,
we consider $n=2$ separately from $n\geq3$.

This machinery can be used to infer properties of the bracket on $H_*(\Omega^n X)$ from the one on $H_*(\Omega^{n-1}X)$. In some cases it determines it completely; $H_*(\Omega^n S^k; \mathbb{Q})$ is such an example, and the reader is invited to compute it. There also appears to be related phenomena in the topological Hochschild homology of $E_n$ ring spectra.

This paper was written as part of the 2017 Summer Program in Undergraduate
Research (SPUR) at MIT directed by Slava Gerovitch. The author would
like to thank David Jerison and Ankur Moitra for their guidance, his
mentor Robert Hood Chatham for many long insightful discussions, and his
advisor Haynes Miller for providing this intriguing project and giving detailed feedback on this paper.

\section{\label{sec:Background}Background}

Given a space $X$, the loop space $\Omega X$ is typically modeled
as the space of all paths $I\rightarrow X$ sending $\{0,1\}$ to $*\in X$,
the basepoint. This model has a defect: the multiplication (given
by concatenation and doubling the speed) is only unital and associative
up to homotopy. To remedy this, we instead adopt the following homotopy
equivalent model, called the ``Moore loops'' of $X$ after J. C. Moore \cite{Moore56}.
\begin{defn}[Moore Loops]\label{def:Moore-loops}
Given a space $X$ with basepoint $*$, let $X^\mathbb{R}$ denote the space of all continuous maps from $\mathbb{R}$ to $X$ which send $0$ to $*$. Define $\Omega X$ to be the subspace
of $X^{\mathbb{R}}\times\mathbb{R}_{\geq0}$ consisting of pairs $(\alpha,l)$
such that $\alpha(t)=*$ if $t\leq0$ or $t\geq l$. That is, $\Omega X$
consists of loops in $X$ together with their lengths. $\Omega X$
is itself a pointed space, with basepoint the constant path $c_{*}$
at $*\in X$ of length 0, and moreover it is an H-space with the multiplication
$(\alpha,l_{\alpha})(\beta,l_{\beta})=(\omega,l_{\alpha}+l_{\beta})$
where
\[
\omega(t)=\begin{cases}
\alpha(t) & \text{if }t\leq l_{\alpha},\text{ and}\\
\beta(t-l_{\alpha}) & \text{if }l_{\alpha}\leq t.
\end{cases}
\]
\end{defn}
The benefit of this model is that $\Omega X$ is a strictly associative
H-space with unit $c_{*}$. Let $C_{*}$ denote
the normalized singular chain complex, which is the singular chain
complex modulo degenerate chains. The complex $C_{*}(\Omega X)$
is a differential graded algebra (henceforth abbreviated DGA) with
the multiplication
\[
C_{*}(\Omega X)\otimes C_{*}(\Omega X)\xrightarrow{\text{EZ}}C_{*}(\Omega X\times\Omega X)\rightarrow C_{*}(\Omega X)
\]
where the first map is the Eilenberg-Zilber map and the latter is
induced by the multiplication on $\Omega X$. In turn, this induces
a multiplication on $H_{*}(\Omega X)$---the Pontryagin product.
\begin{rem*}[Notation]
Throughout, we will consider only connected spaces.
Coefficients are taken in a field $k$ to ensure that the K\"{u}nneth
map is an isomorphism. As such, this condition can be relaxed to $k$
a commutative ring and $H_{*}(\Omega^n X;k)$ flat.

If $x$ is an element of a (bi)graded object, then we take $\left|x\right|$
to mean its total degree. ``Commutative'' will mean ``commutative
in the graded sense'' so that $xy=(-1)^{\left|x\right|\left|y\right|}yx$.
\end{rem*}

\subsection{The bar construction\label{subsec:The-bar-construction}}

The following is a brief summary of the bar construction. For more details than what is provided here, see \cite{Eilenberg-MacLane53} and \cite{sseq user's guide}. We begin
in a purely algebraic context and then specialize to the case of topological
interest at the end of \S\ref{subsec:bar-sseq}. Consider a
DGA $(\mathcal{A},d_{\mathcal{A}})$ with augmentation $\epsilon\colon\mathcal{A}\rightarrow k$.
\begin{defn}
\label{def:bar-construction}Let $\overline{\mathcal{A}}=\ker\epsilon$.
The (normalized) \emph{bar construction} $\B_{*,*}(\mathcal{A})$
is a bigraded $k$-module with
\[
\B_{s,*}(\mathcal{A})=\underset{s\text{ times}}{\underbrace{\overline{\mathcal{A}}\otimes\cdots\otimes\overline{\mathcal{A}}}}
\]
of which the component in degree $t$ is denoted $\B_{s,t}(\mathcal{A})$.
We call $s$ the \emph{external degree}, and $t$ the \emph{internal
degree}, and $s+t$ the \emph{(total) degree}.\emph{ }It is conventional
to write $\alpha_{1}\otimes\cdots\otimes\alpha_{s}\in\B_{s,*}(\mathcal{A})$
as $[\alpha_{1}|\cdots|\alpha_{s}]$. The bar construction is a double
complex with an \emph{internal differential} $d$ of bidegree $(0,-1)$
and an \emph{external differential} $\delta$ of bidegree $(-1,0)$,
defined as
\begin{align*}
d[\alpha_{1}|\cdots|\alpha_{s}] & =\sum_{i=1}^{s}(-1)^{\sigma(i-1)}[\alpha_{1}|\cdots|d_{\mathcal{A}}\alpha_{i}|\cdots|\alpha_{s}],\\
\delta[\alpha_{1}|\cdots|\alpha_{s}] & =\sum_{i=1}^{s-1}(-1)^{\sigma(i)}[\alpha_{1}|\cdots|\alpha_{i}\alpha_{i+1}|\cdots|\alpha_{s}],
\end{align*}
where the sign is given by $\sigma(i)=\left|[\alpha_{1}|\cdots|\alpha_{i}]\right|$.
We will often use $d$ to denote both the differential on $\mathcal{A}$
and the internal differential in $\B_{*,*}(\mathcal{A})$, as there
is no risk of confusion.
\end{defn}
The homology of the total complex with differential $D=d+\delta$
computes $\Tor_{*}^{\mathcal{A}}(k,k)$:
\begin{equation}
H_{*}(\tot\B_{*,*}(\mathcal{A}))\cong\Tor_{*}^{\mathcal{A}}(k,k).\label{eq:TorA(k,k)}
\end{equation}

The bar construction has a natural comultiplication $\Delta\colon\B_{*,*}(\mathcal{A})\rightarrow\B_{*,*}(\mathcal{A})\otimes\B_{*,*}(\mathcal{A})$
sending
\[
\Delta[\alpha_{1}|\cdots|\alpha_{s}]=\sum_{i=0}^{s}[\alpha_{1}|\cdots|\alpha_{i}]\otimes[\alpha_{i+1}|\cdots|\alpha_{s}]
\]
which gives $\Tor_{*}^{\mathcal{A}}(k,k)$ a coalgebra structure.
\begin{defn}
\noindent \label{def:shuffle-product}Let $p$ and $q$ be non-negative integers. A $(p,q)$\emph{-shuffle} $\varphi$ is a permutation of $\{1,\ldots,p+q\}$ such that $\varphi(a)<\varphi(b)$
if $1\leq a<b\leq p$ or if $p+1\leq a<b\leq p+q$. If $\mathcal{A}$
and $\mathcal{A}'$ are DGAs, the \emph{shuffle product} maps $\B_{*,*}(\mathcal{A})\otimes\B_{*,*}(\mathcal{A}')\rightarrow\B_{*,*}(\mathcal{A}\otimes\mathcal{A}')$,
and is defined as
\[
[\alpha_{1}|\cdots|\alpha_{p}]\otimes[\alpha_{1}'|\cdots|\alpha_{q}']\mapsto\sum_{(p,q)\text{-shuffles }\varphi}(-1)^{\sigma(\varphi)}[a_{\varphi^{-1}(1)}|\cdots|a_{\varphi^{-1}(p+q)}],
\]
where
\[
a_{i}=\begin{cases}
\alpha_{i}\otimes1 & \text{if }i\leq p,\\
1\otimes\alpha_{i-p}' & \text{if }i>q,
\end{cases}
\]
\[
\sigma(\varphi)=\sum_{\varphi(i)>\varphi(j+p)}(|\alpha_{i}|+1)(|\alpha_{j}'|+1).
\]
The shuffle product was introduced by Eilenberg and Mac Lane in \cite{Eilenberg-MacLane53}.

Each shuffle in the sum can be thought of as a walk from $(0,0)$ to $(p,q)$ that
goes rightwards or upwards at each step---see \figref{shuffle-product}. The identity shuffle is the
walk that goes through $(p,0)$, and deviation from this shuffle incurs
the usual signs from moving elements past each other. Note that $[\alpha_{i}]$
has bidegree $(1,|\alpha_{i}|)$ and thus total degree $|\alpha_{i}|+1$.
\end{defn}
\begin{figure}
	\includegraphics[width=6cm]{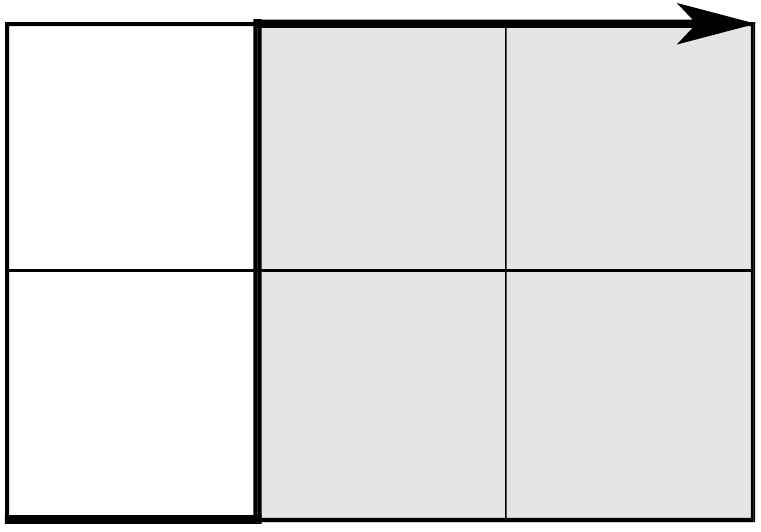}
	
	\caption{\label{fig:shuffle-product}A pictorial representation of the term $\pm [x_1 \otimes 1|1\otimes y_1 |1\otimes y_2|x_2 \otimes 1|x_3 \otimes 1]$ in the shuffle of $[x_1|x_2|x_3]$ and $[y_1|y_2]$. The sign is determined by the blocks under the walk.}
\end{figure}
The shuffle product gives $\B_{*,*}(\mathcal{A})$ a commutative product if $\mathcal{A}$ itself is commutative: the multiplication $\mathcal{A}\otimes\mathcal{A}\rightarrow\mathcal{A}$ is an algebra map and induces a map $\B_{*,*}(\mathcal{A}\otimes\mathcal{A})\rightarrow\B_{*,*}(\mathcal{A})$. Then the multiplication on $\B_{*,*}(\mathcal{A})$ is given by the composite
\[
\B_{*,*}(\mathcal{A})\otimes\B_{*,*}(\mathcal{A})\xrightarrow{\text{EZ}}\B_{*,*}(\mathcal{A}\otimes\mathcal{A})\rightarrow\B_{*,*}(\mathcal{A})
\]
where the first map is the shuffle product.

\subsection{\label{subsec:bar-sseq}The bar spectral sequence}

Since the preceding gives $\Tor_{*}^{\mathcal{A}}(k,k)$ as the total
homology of a first-quadrant double complex, we receive a strongly
convergent spectral sequence by filtering the total complex by external
degree $s$:
\[
F_{s}\left(\tot\B_{*,*}(\mathcal{A})\right)_{n}=\bigoplus_{\substack{p+q=n\\
p\leq s
}
}\B_{p,q}(\mathcal{A}).
\]
The associated graded of this filtration is
\begin{align*}
E_{s,t}^{0}=\gr_{s}\left(\tot\B_{*,*}(\mathcal{A})\right)_{s+t} & =\B_{s,t}(\mathcal{A})
\end{align*}
and the differentials on the first two pages are given by $d_{0}=d$,
$d_{1}=\delta$ as in \defref{bar-construction}. By the K\"{u}nneth formula,
$H_{*}(\B_{s,*}(\mathcal{A}))\cong\B_{s,*}(H_{*}(\mathcal{A}))$ and
the $E_{*,*}^{1}$ page is the bar complex on $H_{*}(\mathcal{A})$
if we treat it as a DGA with trivial differential. Therefore our spectral
sequence has the form
\begin{equation}
E_{*,*}^{2}\cong\Tor_{*,*}^{H_{*}(\mathcal{A})}(k,k)\Rightarrow\Tor_{*}^{\mathcal{A}}(k,k).\label{eq:algebraic-bar-ss}
\end{equation}

The topological significance of this construction stems from the fact
that, if $G$ is an associative H-space, there is a chain equivalence
\begin{equation}
\tot\B_{*,*}(C_{*}(G))\xrightarrow{\simeq}C_{*}(BG)\label{eq:topochainequiv}
\end{equation}
inducing the homology isomorphism
\begin{equation}
\Tor_{*}^{C_{*}(G)}(k,k)\cong H_{*}(BG;k).\label{eq:topologyiso}
\end{equation}
In general, (\ref{eq:algebraic-bar-ss}) is a spectral sequence of coalgebras and (\ref{eq:topologyiso}) is an isomorphism of coalgebras \cite{cartan-moore seminar}. Even if $\mathcal{A}$
itself is not strictly commutative, it may still be possible to grant
the total complex $\tot\B_{*,*}(\mathcal{A})$ a multiplication\textemdash this
is the case when $\mathcal{A}=C_{*}(\Omega G)$. We will return to this point in \S\ref{sec:bracket-2-fold}.
In this case, (\ref{eq:algebraic-bar-ss}) is a spectral sequence of Hopf algebras and (\ref{eq:topologyiso}) is an isomorphism of Hopf algebras \cite{homocomm}.

\subsection{\label{subsec:Browder-bracket}Browder's bracket}

Browder \cite{browder bracket} inductively defines\footnote{We are using the Moore loops model of $\Omega X$ for its benefit of strict associativity. If one instead uses the non-associative model where elements of $\Omega X$ are maps $(I,\partial I)\rightarrow(X,*)$, the
little cubes operad gives a map
\[
\tilde{\phi}\colon\mathcal{C}_{n}(2)\times\Omega^{n}X\times\Omega^{n}X\rightarrow\Omega^{n}X
\]
where $\mathcal{C}_{n}(2)\simeq S^{n-1}$.} a map $\phi\colon \Omega^{n}X\times S^{n-1} \times\Omega^{n}X\rightarrow\Omega^{n}X$
where $S^{n-1}$ parametrizes the choice of multiplication on $\Omega^{n}X$.

The map ${\phi}_{1}\colon \Omega X\times S^{0}\times \Omega X\rightarrow\Omega X$
sends ${\phi}(a,1,b)=ab$ and $\tilde{\phi}(a,-1,b)=ba$, where
multiplication is concatenation of loops as in \defref{Moore-loops}.

Given ${\phi}_{n}\colon \Omega^n X \times S^{n-1} \times \Omega^n X \to \Omega^n X$,
define a map $\Omega^{n+1} X\times S^{n-1}\times[-1,1]\times \Omega^{n+1} X\rightarrow\Omega^{n+1} X$
by
\[
(a,u,t,b)\mapsto\begin{cases}
\tilde{\phi}_{n}(c(l(b)t)a,u,b) & \text{if }t\geq0,\\
\tilde{\phi}_{n}(a,u,c(l(a)\left|t\right|)b) & \text{if }t\leq0,
\end{cases}
\]
where $l(a)$ denotes the length of $a$ and $c(t)$ is a constant
loop of length $t$ at the basepoint of $\Omega^n X$. This map factors through
\[
\Omega^{n+1} X \times S^n \times \Omega^{n+1} X \cong \Omega^{n+1} X \times \frac{S^{n-1}\times[-1,1]}{(S^{n-1}\times\{-1\})\cup (S^{n-1}\times\{1\})} \times \Omega^{n+1} X \rightarrow \Omega^{n+1} X
\]
which we take to be ${\phi}_{n+1}$. We will suppress the subscript since it can be inferred from context, and we will write $\phi_{*}$ both for the map induced on chains and for
the map induced on homology.
\begin{defn}
Let $\gamma\in H_{n-1}(S^{n-1})$ be a generator. Then $H_{*}(\Omega^{n}X)$
has a \emph{bracket}\footnote{Unfortunately, there are a multitude of square brackets appearing
in this work\textemdash and all of them are conventional notation.}
\[
[-,-]\colon H_{p}(\Omega^{n}X)\otimes H_{q}(\Omega^{n}X)\rightarrow H_{p+q+n-1}(\Omega^{n}X),
\]
\[
[x,y]=\phi_{*}(x\otimes\gamma\otimes y).
\]
\end{defn}

We remark that this bracket is related to Browder's $\psi$ by $[x,y]=(-1)^{(n-1)\left|x\right|}\psi(x,y)$. The sign difference is because he writes $S^{n-1} \times \Omega^n X \times \Omega^n X$ instead of $\Omega^n X \times S^{n-1} \times \Omega^n X$.

In \cite{Homology-Iterated-Loop-Spaces}, Cohen showed that $H_{*}(\Omega^{n}X)$ is a \emph{Poisson Hopf algebra} with this bracket of degree $n-1$. That is to say, the bracket is:
\begin{itemize}
	\item antisymmetric: $$[x,y]=-(-1)^{(|x|+n-1)(|y|+n-1)}[y,x],$$
	\item (Poisson identity) a derivation with respect to the multiplication: $$[x,yz]=[x,y]z+(-1)^{|y|(|x|+n-1)}y[x,z],$$
	\item (Jacobi identity) a derivation with respect to itself: $$[x,[y,z]]=[[x,y],z]+(-1)^{(|x|+n-1)(|y|+n-1)}[y,[x,z]],$$
	\item and compatible with the coproduct:
	\begin{equation}
		\Delta([x,y]) = [\Delta(x),\Delta(y)].\label{eq:coproduct-bracket}
	\end{equation}
	Here $[\Delta(x),\Delta(y)]$ means the bracket in the tensor product, which is given by the following formula (extended linearly):
	\begin{equation}\label{eq:Poisson-tensor-product}
	[x_1 \otimes x_2, y_1 \otimes y_2] = (-1)^{|x_2|(|y_1|+n-1)}[x_1, y_1] \otimes x_2 y_2 + (-1)^{(|x_2|+n-1)|y_1|} y_1 x_1 \otimes [x_2, y_2].
	\end{equation}
\end{itemize}

\section{\label{sec:combinatorics-bracket-bar-construction}The bracket in the bar construction}
If $\mathcal{A}$ is a commutative DGA, the bar construction $\B_{*,*}(\mathcal{A})$ is a commutative Hopf algebra (c.f. \S\ref{subsec:The-bar-construction}). Now we will show that if $\mathcal{A}$ has a bracket, then $\B_{*,*}(\mathcal{A})$ is endowed with one as well.

A \emph{differential graded Poisson algebra} is a DGA with an antisymmetric, bilinear bracket that satisfies the Poisson and Jacobi identities and moreover satisfies the Leibniz rule with respect to the differential. For example, we can consider $H_*(\Omega^n X)$ as a differential graded Poisson (Hopf) algebra, with trivial differential.
\begin{thm}\label{thm:poisson-hopf-alg-bar-construction}
	Let $\mathcal{A}$ be a commutative differential graded Poisson algebra with bracket of degree $n-1$. The bar construction $\B_{*,*}(\mathcal{A})$ is then a commutative differential bigraded Poisson Hopf algebra, where the bracket has bidegree $(-1,n-1)$ and satisfies Liebniz with respect to both the internal differential $d$ and the external differential $\delta$.
	
	Explicitly, if $x_1,\ldots,x_p,y_1,\ldots,y_q \in \mathcal{A}$, the bracket $\left[[x_{1}|\cdots|x_{p}],[y_{1}|\cdots|y_{q}]\right]$
	is given by (using the notation of \defref{shuffle-product})
	\begin{align}\label{eq:formula-for-bar-bracket}
	\sum_{(p,q)\text{-shuffles }\varphi}\sum_{\substack{\varphi^{-1}(i)\leq p\\
			\varphi^{-1}(i+1)>p
		}
	}(-1)^{\sigma(\varphi)}\big[a_{\varphi^{-1}(1)}|\cdots|(-1)^{\sigma'(\varphi,i)}[a_{\varphi^{-1}(i)},a_{\varphi^{-1}(i+1)}]|\cdots|a_{\varphi^{-1}(p+q)}\big]
	\end{align}
	\[
	a_{i}=\begin{cases}
	x_{i} & \text{if }i\leq p,\\
	y_{i-p} & \text{if }i>p,
	\end{cases}
	\]
	where the sign $(-1)^{\sigma(\varphi)}$ is from the shuffle product and ${\sigma'(\varphi,i)}$ is defined as
	\[
	\sigma'(\varphi,i)=n\left(\sum_{\varphi(j)>i+1}(\left|x_{j}\right|+1)+\sum_{\varphi(j)<i}(\left|y_{j}\right|+1)\right).
	\]
\end{thm}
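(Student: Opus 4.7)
The plan is to take (\ref{eq:formula-for-bar-bracket}) as a definition and verify each axiom of a Poisson Hopf algebra in turn. Well-definedness, bilinearity, and the bidegree $(-1,n-1)$ are immediate from the combinatorics: each summand replaces two adjacent shuffled entries by a single bracketed entry (so external degree drops by one), and the bracket on $\mathcal{A}$ contributes exactly $n-1$ to the internal degree. The sign $\sigma'(\varphi,i)$ is designed to encode the Koszul sign incurred by pushing a ``bracket operator'' of parity $n$ past the internal-tensor-degree-$(|a|+1)$ entries that land outside the bracket position in the shuffled word. Once these bookkeeping checks are in place, the real content is the compatibility with $d$, $\delta$, the multiplication, itself, and the coproduct.

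Leibniz with respect to the internal differential $d$ is immediate: $d$ acts entry-wise, and the Leibniz rule for the bracket on $\mathcal{A}$ applied to $[a_{\varphi^{-1}(i)},a_{\varphi^{-1}(i+1)}]$ matches the terms produced by letting $d$ land on the two neighbors inside the bracket. Leibniz with respect to $\delta$ is the combinatorial heart of the proof. Applying $\delta$ to (\ref{eq:formula-for-bar-bracket}) produces, for each shuffle, contributions from each pair of neighbors in the resulting bar word. These split into three classes: neighbors which are both $x$'s, neighbors which are both $y$'s, and pairs where one neighbor is the bracket $[a_{\varphi^{-1}(i)},a_{\varphi^{-1}(i+1)}]$. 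The first two classes will pair bijectively with the terms of $[\delta x,y]$ and $\pm[x,\delta y]$, while the third class will require the Poisson identity on $\mathcal{A}$ to split $\delta$ of the bracketed entry into two bracketed pieces; the bijection is given by shuffles of sizes $(p-1,q)$ and $(p,q-1)$ together with an explicit multiplication site.

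The remaining axioms all reduce to bijections on shuffles combined with the corresponding pointwise identity on $\mathcal{A}$. Antisymmetry uses the involution $\varphi \mapsto \varphi^{\mathrm{op}}$ between $(p,q)$- and $(q,p)$-shuffles, with the signs lining up via the antisymmetry of the bracket on $\mathcal{A}$. The Poisson identity (derivation of $[-,-]$ over the shuffle product) follows by splitting a shuffle of $x$, $y\cdot z$ (or $x\cdot y$, $z$) according to which side of the multiplication each $y$- or $z$-entry lands on, relative to the unique bracket. Jacobi decomposes over triple shuffles on $(p,q,r)$ indices with two marked bracket positions; summands in which the two brackets do not share an argument cancel in pairs, and those in which they do share an argument are recombined using Jacobi on $\mathcal{A}$. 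Coproduct compatibility (\ref{eq:coproduct-bracket}) follows from the observation that the deconcatenation coproduct cuts a shuffled word at some index, and the unique bracket lies either to the left or to the right of the cut, producing precisely the two summands of (\ref{eq:Poisson-tensor-product}).

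The main obstacle is systematic sign management. Every identity above is in principle a clean bijection on shuffles, but (\ref{eq:formula-for-bar-bracket}) juxtaposes three families of signs: the shuffle-product Koszul sign $\sigma(\varphi)$, the auxiliary sign $\sigma'(\varphi,i)$ arising from an operator of parity $n$, and the additional commutation signs generated when $d$, $\delta$, or $\Delta$ act. Checking that the bijections described above are sign-preserving, uniformly across the Leibniz, Poisson, Jacobi, and coproduct identities, is where the real technical labor lies; one expects to verify it by reducing each claim to a model calculation on short bar words (typically of external degrees two or three on each side), which captures all the relevant sign interactions, and then extending by the shuffle bijection.
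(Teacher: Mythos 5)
Your proposal matches the paper's own approach: the paper likewise takes (\ref{eq:formula-for-bar-bracket}) as the definition and verifies the Poisson Hopf axioms combinatorially via bijections on shuffles, checking the Poisson identity (by $3$-way shuffles as walks in a box with one marked diagonal) and coproduct compatibility explicitly for $n$ even, and reducing the remaining axioms (Leibniz for $d$ and $\delta$, antisymmetry, Jacobi) to the corresponding identities in $\mathcal{A}$---exactly the reductions you describe. One small correction to your $\delta$-Leibniz sketch: there is a fourth class of adjacent pairs, namely an $x$-entry next to a $y$-entry with neither in the bracket; these do not match anything in $[\delta\mathbf{x},\mathbf{y}]$ or $\pm[\mathbf{x},\delta\mathbf{y}]$ but instead cancel in pairs among themselves, exactly as in the standard verification that $\delta$ is a derivation of the shuffle product, so the classification into ``three classes'' should be four.
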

If one thinks of a shuffle as a path traveling up and right from $(0,0)$
to $(p,q)$, then the terms in (\ref{eq:formula-for-bar-bracket}) are such paths with a single
bracket inserted on a lower-right corner. See \figref{shuffle-product-bracket}.
\begin{figure}
	\includegraphics[width=6cm]{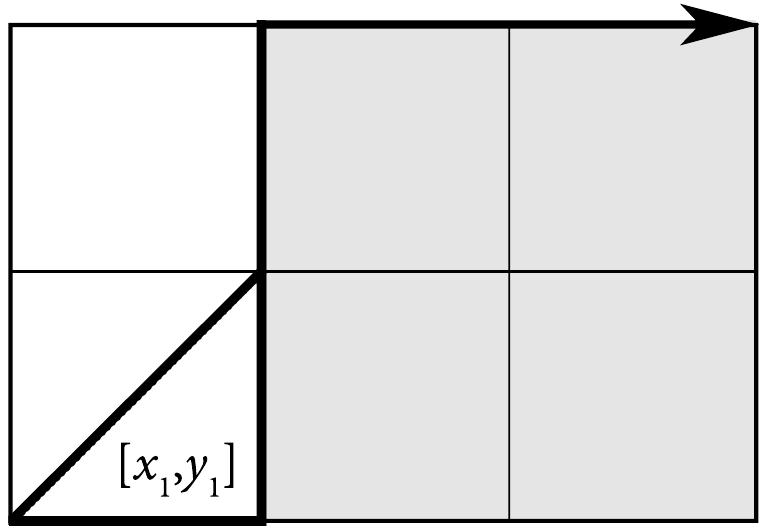}
	
	\caption{\label{fig:shuffle-product-bracket}The term $\pm \big[[x_1,y_1] |y_2|x_2|x_3\big]$ in the bracket $\big[[x_1|x_2|x_3],[y_1|y_2]\big]$.}
\end{figure}
\begin{proof}
	\begin{figure}[h]
		\includegraphics[width=6cm]{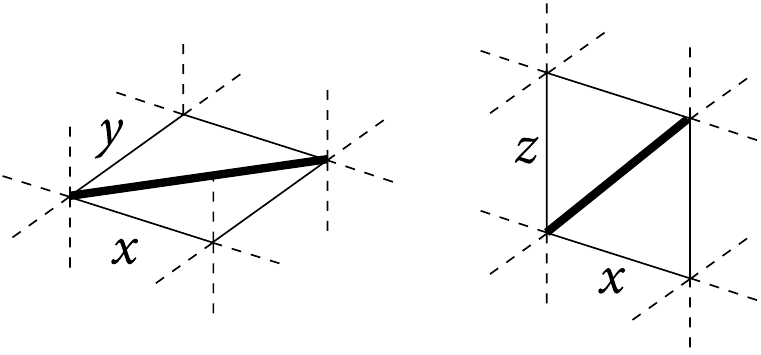}
		
		\caption{\label{fig:diagonals-for-Poisson-id}Brackets appearing in terms in (\ref{eq:Poisson-id-n-even}).}
	\end{figure}
	The needed axioms can be checked combinatorially. We check the Poisson identity as an example, in the case $n$ is even. Since the bracket has even total degree $n-2$, the Poisson identity states
	\begin{equation}
	[\mathbf{x},\mathbf{y}\mathbf{z}]-[\mathbf{x},\mathbf{y}]\mathbf{z}-(-1)^{|\mathbf{y}||\mathbf{x}|}\mathbf{y}[\mathbf{x},\mathbf{z}]=0 \label{eq:Poisson-id-n-even}
	\end{equation}
	where
	\begin{align*}
	\mathbf{x}&=[x_1|\cdots|x_p]\\
	\mathbf{y}&=[y_1|\cdots|y_q]\\
	\mathbf{z}&=[z_1|\cdots|z_r].
	\end{align*}
	In the same manner as \figref{shuffle-product}, a 3-way shuffle of $\mathbf{x}$, $\mathbf{y}$, and $\mathbf{z}$ can be thought of as a walk from $(0,0,0)$ to $(p,q,r)$. Terms appearing in (\ref{eq:Poisson-id-n-even}) are walks with a single bracket (see \figref{shuffle-product-bracket}), which may be one of the two types in \figref{diagonals-for-Poisson-id}.
	
	Consider a walk with a diagonal of the first type. It appears in $[\mathbf{x},\mathbf{y}\mathbf{z}]$ and $[\mathbf{x},\mathbf{y}]\mathbf{z}$ with the same sign and thus disappears in (\ref{eq:Poisson-id-n-even}).
	
	Otherwise, the walk has a diagonal of the second type. It appears in $[\mathbf{x},\mathbf{y}\mathbf{z}]$ and $\mathbf{y}[\mathbf{x},\mathbf{z}]$, but there is a sign difference of $(-1)^{|\mathbf{y}||\mathbf{x}|}$ because of the interchanged shuffle order of $\mathbf{x}$ and $\mathbf{y}$. Hence it also vanishes in (\ref{eq:Poisson-id-n-even}).
	
	We also sketch a visual proof of the bracket's compatibility with the coproduct in \figref{poisson-coproduct}, again in the case that $n$ is even. Let $\mathbf{x}=[x_1|x_2|x_3|x_4|x_5]$ and $\mathbf{y}=[y_1|y_2|y_3|y_4|y_5]$. In the figure, the term $\big[[x_1,y_1]|y_2|x_2|y_3\big] \otimes [y_4|x_3|x_4|y_5|x_5]$ is illustrated. On the left, it is depicted as it appears in $\Delta([\mathbf{x},\mathbf{y}])$; on the right, as it appears in $[\Delta(\mathbf{x}),\Delta(\mathbf{y})]$ as part of $\big[[x_1|x_2],[y_1|y_2|y_3]\big]\otimes [x_3|x_4|x_5][y_4|y_5]$. The difference in signs is
	\[
		(-1)^{\big|[x_3|x_4|x_5]\big|\cdot\big|[y_1|y_2|y_3]\big|},
	\]
	which is precisely the sign appearing in (\ref{eq:Poisson-tensor-product}).
	\begin{figure}[h]
		\centering
		\includegraphics[width=0.7\linewidth]{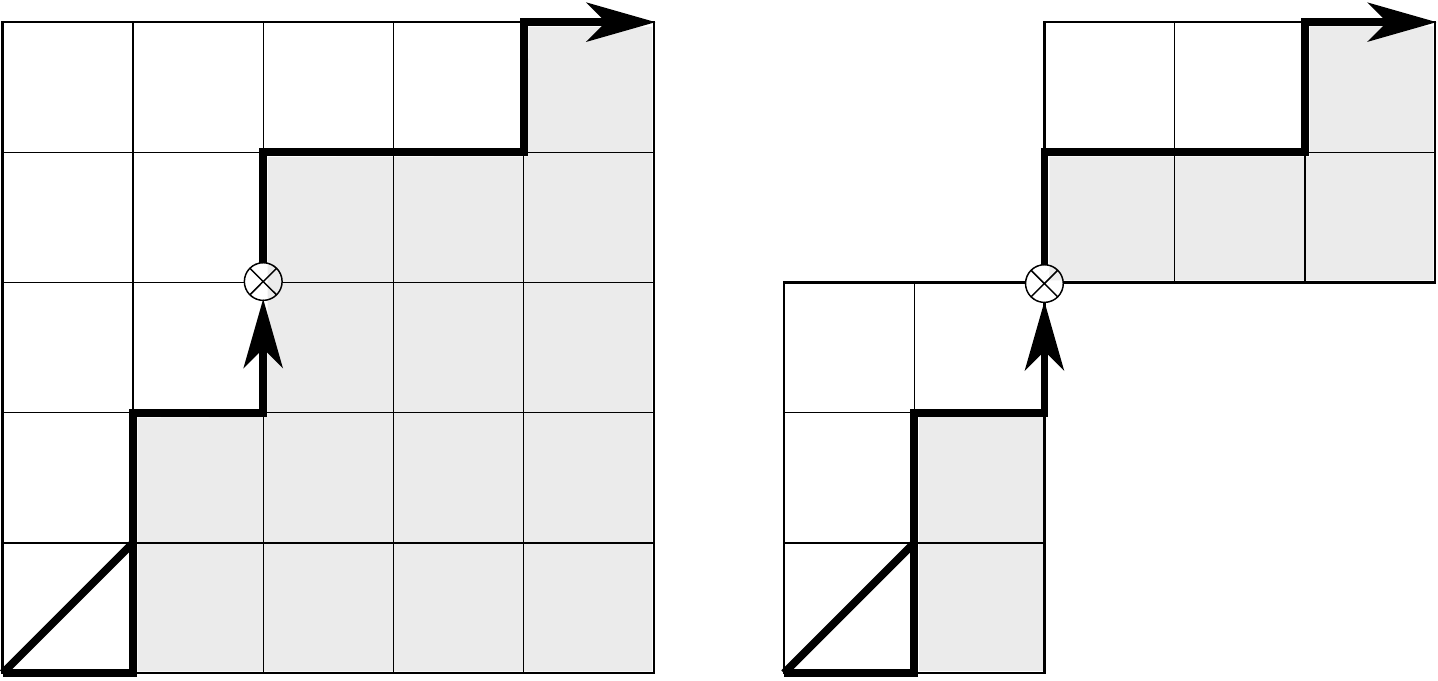}
		\caption{\label{fig:poisson-coproduct}A visual verification of $\Delta([\mathbf{x},\mathbf{y}]) = [\Delta(\mathbf{x}),\Delta(\mathbf{y})]$.}
	\end{figure}

	The above two properties of the bracket---the Poisson identity and compatibility with the coproduct---are ``automatic,'' in the sense that they do not rely on any properties of the bracket in $\B_{*,*}(\mathcal{A})$. They are mechanical consequences of how the product and coproduct in the bar construction are defined. However, checking the other requirements of a differential bigraded Poisson Hopf algebra will depend on properties of the bracket in $\B_{*,*}(\mathcal{A})$. We provide the following rough outline and we leave the details (as well as the case of $n$ odd) to the reader:
	\begin{itemize}
		\item The bracket in $\B_{*,*}(\mathcal{A})$ respects the external differential $\delta$ because the bracket in $\mathcal{A}$ satisfies the Poisson identity.
		\item The bracket in $\B_{*,*}(\mathcal{A})$ respects the internal differential $d$ because the bracket in $\mathcal{A}$ respects the differential $d$.
		\item The bracket in $\B_{*,*}(\mathcal{A})$ is antisymmetric because the bracket in $\mathcal{A}$ is.
		\item The bracket in $\B_{*,*}(\mathcal{A})$ satisfies the Jacobi identity because the bracket in $\mathcal{A}$ does.
	\end{itemize}
\end{proof}

\thmref{poisson-hopf-alg-bar-construction} is relevant to us because the $E^1_{*,*}$ page in the bar spectral sequence is precisely the bar construction on $H_*(\Omega^n X)$ (see \S\ref{subsec:bar-sseq}), where we think of $H_*(\Omega^n X)$ as a DGA with trivial differential. Our main result in the following section is that the bar spectral sequence is a spectral sequence of Poisson Hopf algebras, where the bracket on $E^1_{*,*}$ is as we described above.

\section{\label{sec:bracket-mainproof-section}The bracket in the bar spectral sequence}

\subsection{\label{sec:bracket-2-fold}Further structure in the total bar complex}

As mentioned in \S\ref{subsec:The-bar-construction}, $\B_{*,*}(\mathcal{A})$
has a multiplication when $\mathcal{A}$ is commutative. However,
$\mathcal{A}=C_{*}(\Omega G)$ is not commutative, so the multiplication
$\mathcal{A}\otimes\mathcal{A}\rightarrow\mathcal{A}$ is not an algebra
map. Nonetheless, the multiplication on $G$ induces a multiplication
on $B\Omega G$ which translates through (\ref{eq:topochainequiv})
to a multiplication on $\tot\B_{*,*}(\mathcal{A})$, as follows.

Looping the multiplication
$G\times G\rightarrow G$ gives a map $\Omega(G\times G)\rightarrow\Omega G$. Note that with the Moore loops model of $\Omega G$, the spaces $\Omega G\times\Omega G$ and $\Omega(G\times G)$ are generally different. We have a comparison map $\zeta\colon \Omega G \times \Omega G \to \Omega(G \times G)$ where $\zeta((\omega_{1},l_{1}),(\omega_{2},l_{2}))=((\omega_{1},\omega_{2}),\max\{l_{1},l_{2}\})$.
From this we obtain a product on $\Omega G$ which multiplies two loops pointwise and takes the longer of the two lengths:
\[
M_{0}\colon\Omega G\times\Omega G\xrightarrow{\zeta}\Omega(G\times G)\rightarrow\Omega G.
\]
This product is unital and associative; moreover it is homotopic to the concatenation product on $\Omega G$. We would like to deloop it to obtain a product on $B\Omega G$.

Sugawara \cite{sugawara} shows that if $Y_1$ and $Y_2$ are associative H-spaces, then a map $M_{0}\colon Y_{1}\rightarrow Y_{2}$
determines a map $BY_1 \rightarrow BY_2$ if for $n\geq1$ there exist
homotopies
\[
M_{n}\colon\underset{n\text{ copies of }I}{\underbrace{Y_{1}\times I\times Y_{1}\times I\times\cdots\times I\times Y_{1}}}\rightarrow Y_{2}
\]
satisfying
\[
M_{n}(y_{1},t_{1},\ldots,t_{n},y_{n+1})=\begin{cases}
M_{n-1}(y_{1},t_{1},\ldots,t_{i-1},y_{i}\times_{1}y_{i+1},t_{i+1},\ldots,t_{n},y_{n+1}) & \text{if }t_{i}=0,\\
M_{i-1}(y_{1},t_{1},\ldots,t_{i-1},y_{i})\times_{2}M_{n-i}(y_{i+1},t_{i+1},\ldots,t_{n},y_{n+1}) & \text{if }t_{i}=1,
\end{cases}
\]
where $\times_{1}$ and $\times_{2}$ denote the multiplications on
$Y_{1}$ and $Y_{2}$ respectively.

Clark \cite{homocomm} shows that $\zeta$ satisfies the above conditions,
so our $M_{0}$ can be delooped. We give a summary below.

$\Omega G$ has an outer multiplication
which concatenates two loops and an inner
multiplication $M_0$ which multiplies two loops pointwise. If
$l(a)$ denotes the length of $a$ and $c(t)$ denotes the constant
path of length $t$ at the basepoint of $G$, then explicitly $M_{1}$
is defined as
\begin{align*}
M_{1}((a_{1},b_{1}),t,(a_{2},b_{2})) & = M_0(a_{1}c\left(t(\max\{l(a_{1}),l(b_{1})\}-l(a_{1}))\right)a_{2},\\
 & \hspace*{1em} b_{1}c\left(t(\max\{l(a_{1}),l(b_{1})\}-l(b_{1}))\right)b_{2}).
\end{align*}
This is illustrated in \figref{M1-visual}, where the dotted segments
represent constant paths at the basepoint, and the total upper and
lower loops are multiplied pointwise.

\begin{figure}[h]
\includegraphics[width=5cm]{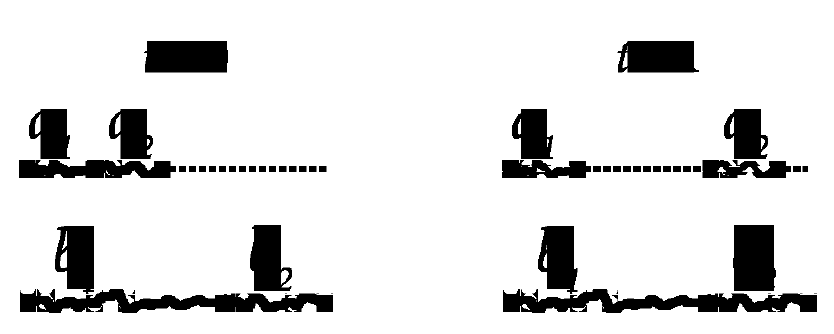}

\caption{\label{fig:M1-visual}The homotopy $M_{1}$ on $((a_{1},b_{1}),t,(a_{2},b_{2}))$
at $t=0,1$.}

\end{figure}

The reader is referred to Proposition 1.6 in \cite{homocomm} for
inductive definitions of higher $M_{n}$, but we will only need $M_{1}$
for now. These $M_{n}$ are used to construct the delooped map $B(\Omega G\times\Omega G)\rightarrow B\Omega G$.

We describe the corresponding map $\tot B_{*,*}(\mathcal{A}\otimes\mathcal{A})\rightarrow\tot B_{*,*}(\mathcal{A})$.
At the chain level, $M_{n}$ induces
\[
	\underset{n \text{ copies of }C_*(I)}{\underbrace{(\mathcal{A} \otimes \mathcal{A}) \otimes C_*(I) \otimes (\mathcal{A} \otimes \mathcal{A}) \otimes C_*(I) \otimes \cdots \otimes C_*(I) \otimes (\mathcal{A} \otimes \mathcal{A})}} \to \mathcal{A}.
\]
By taking the identity map $(\Delta^1 \to I) \in C_1(I)$ in each occurrence of $C_*(I)$, we get a map
\[
h_{n}\colon(\mathcal{A}\otimes\mathcal{A})^{\otimes(n+1)}\rightarrow\mathcal{A}
\]
which shifts degree by $n$. These assemble
into the map
\begin{equation*}
[x_{1}\otimes y_{1}|\cdots|x_{n}\otimes y_{n}] \mapsto\sum \Big[ h_{i_{1}-1}(x_{1}\otimes y_{1}|\cdots|x_{i_{1}}\otimes y_{i_{1}}) \Big|\cdots\Big|h_{i_{k}-1}(x_{n-i_{k}+1}\otimes y_{n-i_{k}+1}|\cdots|x_{n}\otimes y_{n})\Big]
\end{equation*}
where the sum is taken over all $k$-tuples (as $k$ ranges from $1$ to $n$) of positive integers $i_1,\ldots,i_k$ with $i_1 + \cdots + i_k = n$. Composing the above map with the shuffle product gives
\begin{align}
\B_{p,q}(\mathcal{A})\otimes\B_{s,t}(\mathcal{A})\xrightarrow{\text{EZ}}\B_{p+s,q+t}(\mathcal{A}\otimes\mathcal{A}) & \rightarrow\bigoplus_{\substack{m+n=p+q+s+t\\
m\leq p+s
}
}\B_{m,n}(\mathcal{A})\label{eq:generalbarproduct}\\
 & =F_{p+s}(\tot\B_{*,*}(\mathcal{A}))_{p+q+s+t}\nonumber 
\end{align}
which extends to a multiplication
on $\tot\B_{*,*}(\mathcal{A})$.

\subsection{The case $n=2$}

Browder's bracket on $H_{*}(\Omega X)$ has degree 0; it is just the
commutator $[x,y]=xy-(-1)^{\left|x\right|\left|y\right|}yx$. We consider the commutator on the bar complex and show that it induces a bracket on the spectral sequence which, by (\ref{eq:topologyiso}), converges to the commutator on $H_{*}(\Omega X)$.
The portion of (\ref{eq:generalbarproduct}) landing in $\B_{p+s,q+t}(\mathcal{A})$
is the shuffle product, which is commutative. Hence that portion vanishes
in the commutator, which is therefore a map with a $-1$ shift in
filtration degree.
\begin{prop}
\label{prop:bracket-2-fold-filtration-preserving}The bar spectral
sequence
\[
E_{*,*}^{2}\cong\Tor_{*,*}^{H_{*}(\Omega^{2}X)}(k,k)\Rightarrow\Tor_{*}^{C_{*}(\Omega^{2}X)}(k,k)
\]
is a spectral sequence of Poisson Hopf algebras, with bracket of bidegree $(-1,1)$ satisfying
\begin{equation}
	d^r[x,y] = [d^rx,y] + (-1)^{|x|}[x,d^r y].\label{eq:2-bracket-Leibniz-ss}
\end{equation}
\end{prop}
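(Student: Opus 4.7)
The plan is to take as the bracket the graded commutator $[x,y] = xy - (-1)^{|x||y|}yx$ with respect to the multiplication on $\tot\B_{*,*}(\mathcal{A})$ constructed in \S\ref{sec:bracket-2-fold}. The crux of the argument is to show that this commutator drops external filtration by at least one. Inspecting the formula displayed before (\ref{eq:generalbarproduct}), the only tuples $(i_1,\dots,i_k)$ that land in external degree $p+s$ (the top filtration) are those with $k=p+s$ and every $i_j=1$; that is, only $h_0$ contributes, while higher $h_n$ strictly drop external degree. Since $h_0=M_0$ and $M_0(x\otimes 1) = x$, $M_0(1\otimes y) = y$, the top-filtration piece of (\ref{eq:generalbarproduct}) reduces to the shuffle product on $\B(\mathcal{A})$, which is a graded-commutative universal combinatorial identity. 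Hence it cancels in $xy - (-1)^{|x||y|}yx$, and the commutator factors as a map $F_p\otimes F_s\to F_{p+s-1}$, i.e.\ an operation of bidegree $(-1,1)$ on $E^0$.

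The total differential $D = d + \delta$ is a derivation of the multiplication and therefore of the commutator, so $D[x,y] = [Dx,y] + (-1)^{|x|}[x,Dy]$ on $\tot\B_{*,*}(\mathcal{A})$. If $x\in F_p$, $y\in F_s$, the leading $F_{p+s-1}$ part of both sides reads
\[
d[x,y] \equiv [dx,y] + (-1)^{|x|}[x,dy] \pmod{F_{p+s-2}},
\]
which is the Leibniz relation for $d^0$. Since the commutator commutes with $d^0$, it descends to $E^1$ with bidegree $(-1,1)$; the same argument propagates inductively to every subsequent page, yielding the claimed identity $d^r[x,y] = [d^rx,y]+(-1)^{|x|}[x,d^ry]$ throughout the spectral sequence.

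The Poisson axioms are automatic: any graded commutator in an associative algebra is antisymmetric and satisfies Jacobi and the Poisson identity, and these properties pass to each page of the spectral sequence. Compatibility with the coproduct is likewise automatic, because by \S\ref{subsec:bar-sseq} each $E^r$ is a Hopf algebra (its coproduct is an algebra map), and any algebra map preserves commutators. Finally, by (\ref{eq:topologyiso}) the multiplication on $\tot\B_{*,*}(\mathcal{A})$ converges to the Pontryagin product on $H_*(\Omega X)$, so its commutator converges to Browder's degree-$0$ bracket on $H_*(\Omega X)$ at $E^\infty$. The main point requiring verification is the combinatorial claim in the first paragraph about which tuples $(i_1,\dots,i_k)$ reach the top filtration; everything else is a routine check of signs and naturality.
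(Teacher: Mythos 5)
Your first paragraph matches the paper's argument: the top-filtration piece of the product (\ref{eq:generalbarproduct}) is the shuffle product, so it cancels in the commutator, giving a filtration-dropping bracket. Your second paragraph is also in the right spirit (the paper argues via the derivation property (\ref{eq:bracket0Leibniz}) of $D$ and the explicit $Z^r_{p,q}$, $B^r_{p,q}$ machinery of \cite[Theorem 2.3]{sseq user's guide}; your version is looser about why the bracket descends to each page, in particular about showing that bracketing kills $Z^{r+1}_{p+1,q-1}+B^{r+1}_{p,q}$, but the idea is the same).

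The third paragraph has a genuine gap. You claim the Poisson axioms are ``automatic'' because the graded commutator in an associative algebra is always a Poisson bracket. But the multiplication on $\tot\B_{*,*}(\mathcal{A})$ built from Clark's homotopies $M_n$ is not known to be strictly associative --- $M_0$ is associative, but the induced map on $\tot\B_{*,*}(\mathcal{A})$ involving the higher $h_n$'s need not be, and nothing in the paper asserts that it is. Without strict associativity, the Jacobi and Poisson identities for the commutator on the total complex are not free. Your coproduct argument has a separate error: you apply ``algebra maps preserve commutators'' to the coproduct on $E^r$, but the bracket on $E^r$ has bidegree $(-1,1)$, hence is \emph{not} the commutator of the $E^r$-page multiplication (which is graded-commutative, so its commutator vanishes identically). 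The statement you'd need is that the coproduct on the filtered total complex is an algebra map for (\ref{eq:generalbarproduct}), which again is not established. The paper sidesteps both difficulties by \emph{deferring} these axioms: it establishes them on the $E^1$ page by identifying the bracket there with the purely combinatorial bracket on $\B_{*,*}(H_*(\Omega^2 X))$ (\thmref{2-fold-main}), then invoking \thmref{poisson-hopf-alg-bar-construction}, which verifies the Poisson Hopf axioms directly in the commutative setting where the shuffle-combinatorics computation is available. Once $E^1$ is a Poisson Hopf algebra and the bracket is a derivation of each $d^r$, the structure descends to all subsequent pages as a subquotient. This detour is exactly what lets the paper avoid any associativity claim about the total complex.
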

\begin{proof}
Because the multiplication respects the differential $D$ on the total
complex, the commutator does also:
\begin{align}
D[x,y] & =[Dx,y]+(-1)^{\left|x\right|}[x,Dy].\label{eq:bracket0Leibniz}
\end{align}
Abbreviate $\tot\B_{*,*}(\mathcal{A})$ as $J$. The commutator is a
map $F_{p}J\otimes F_{s}J\rightarrow F_{p+s-1}J$. We check that it
induces a bracket on the spectral sequence of the form $E_{p,q}^{*}\otimes E_{s,t}^{*}\rightarrow E_{p+s-1,q+t+1}^{*}$. The method is basically the same as in \cite[Theorem~2.14]{sseq user's guide}.
Let
\begin{align*}
	Z_{p,q}^{r} &= F_pJ_{p+q} \cap D^{-1}(F_{p-r}J_{p+q-1}),\\
	B_{p,q}^r &= F_p J_{p+q} \cap D(F_{p+r}J_{p+q+1}).
\end{align*}
Then the proof of \cite[Theorem~2.3]{sseq user's guide} identifies
\[
	E_{p,q}^r = Z_{p,q}^r / (Z_{p+1,q-1}^{r+1} + B_{p,q}^{r+1}).
\]
Take elements
\begin{align*}
x & \in Z_{p,q}^{r}=F_{p}J_{p+q}\cap D^{-1}(F_{p-r}J_{p+q-1})\\
y & \in Z_{s,t}^{r}=F_{s}J_{s+t}\cap D^{-1}(F_{s-r}J_{s+t-1})
\end{align*}
representing classes in $E_{p,q}^{r}$ and $E_{s,t}^{r}$, respectively.
By (\ref{eq:bracket0Leibniz}) we have
\[
[x,y]\in F_{p+s-1}J_{p+q+s+t}\cap D^{-1}(F_{p+s-r-1}J_{p+q+s+t-1})=Z_{p+s-1,q+t+1}^{r}.
\]
We leave it to the reader to check that bracketing an element of $Z_{p+1,q-1}^{r+1} + B_{p,q}^{r+1}$ with $y$ gives an element of $Z_{p+s,q+t}^{r+1} + B_{p+s-1,q+t+1}^{r+1}$, so we indeed get a map of the form $E_{p,q}^{*}\otimes E_{s,t}^{*}\rightarrow E_{p+s-1,q+t+1}^{*}$
as desired.

The bracket on each page of the spectral sequence
is induced by the one on $J$, so they are always compatible. In
particular, on the $E_{*,*}^{\infty}$ page, the bracket is the one
induced on $\gr_{*}H_{*}(J)$. The various requirements for a Poisson Hopf algebra will follow from \thmref{2-fold-main} and \thmref{poisson-hopf-alg-bar-construction}. Finally, (\ref{eq:2-bracket-Leibniz-ss}) follows from (\ref{eq:bracket0Leibniz}).
\end{proof}
Now that we know the bracket is well-behaved in the spectral sequence,
we explicitly compute it on the $E_{*,*}^{1}$ page to see how it
relates to the degree 1 bracket on $H_{*}(\Omega^{2}X)$.
\begin{thm}
\label{thm:2-fold-main}For $x,y\in H_{*}(\Omega^{2}X)$, let $[x,y]$
denote their degree 1 bracket. In the bar spectral
sequence converging to $H_{*}(\Omega X)$, the $E_{*,*}^{1}$ page is the bar construction on $H_*(\Omega^2 X)$, where the bracket is given by \thmref{poisson-hopf-alg-bar-construction}.
\end{thm}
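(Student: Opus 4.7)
The plan is to evaluate the commutator on $\tot\B_{*,*}(\mathcal{A})$, with $\mathcal{A} = C_*(\Omega^2 X)$, modulo filtration $p+q-2$ using the explicit multiplication formula in (\ref{eq:generalbarproduct}), and to identify the resulting class on $E^1_{*,*}$ with the bar-bracket formula of \thmref{poisson-hopf-alg-bar-construction}. Writing $\mathbf{x} = [x_1|\cdots|x_p]$ and $\mathbf{y} = [y_1|\cdots|y_q]$ for cycle representatives on $E^1$, each of the products $\mathbf{x}\mathbf{y}$ and $\mathbf{y}\mathbf{x}$ is a shuffle followed by a sum, over ordered partitions of $p+q$, of compositions $[h_{i_1-1}|\cdots|h_{i_k-1}]$. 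Since $h_{i-1}$ shifts degree by $i-1$ and drops filtration by $i-1$, the part preserving total filtration is precisely the shuffle product (all $h_0$'s), which is graded-commutative and thus vanishes in the commutator. Only terms that drop filtration by exactly one --- those with a single $h_1$ and all other $h_0$'s --- contribute to the $E^1$ bracket.

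The next step is to classify the $h_1$ contributions by which adjacent pair in a $(p,q)$-shuffle $\varphi$ the $h_1$ consumes. I would show that if the pair consists of two entries from the same side, so $h_1(x_\alpha \otimes 1 \,|\, x_{\alpha+1} \otimes 1)$ or $h_1(1 \otimes y_\beta \,|\, 1 \otimes y_{\beta+1})$, then $M_1$ is constant in the interval parameter (the max-length padding introduces only a constant loop in the trivial second factor), so the resulting chain is degenerate and vanishes in normalized chains. The surviving $h_1$ terms are therefore exactly the cross pairs, matching the constraint $\varphi^{-1}(i) \leq p < \varphi^{-1}(i+1)$ of a right-then-up corner of the shuffle path for $\mathbf{x}\mathbf{y}$, together with its up-then-right mirror coming from $\mathbf{y}\mathbf{x}$.

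The crux is to verify that $h_1(x \otimes 1 \,|\, 1 \otimes y) - (-1)^{|x||y|} h_1(1 \otimes y \,|\, x \otimes 1)$ represents the Browder bracket $[x,y]$ in $H_*(\Omega^2 X)$. Unpacking $M_1$, the first chain is a $1$-parameter interpolation between the two multiplications on $\Omega^2 X$ applied to $(x,y)$ (the outer Moore concatenation at $t=0$ and the inner pointwise product at $t=1$), and the second runs the analogous interpolation with $y$ and $x$ in the opposite order. Glued along their common boundary values they reconstruct exactly the $S^1$-parametrized map $\phi_2(x,-,y)\colon S^1 \to \Omega^2 X$ that Browder uses to define $[x,y]$, so capping with the fundamental class $\gamma \in H_1(S^1)$ recovers the bracket.

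Assembling these pieces and tracking the Koszul signs that arise from moving past entries of the bar word yields exactly formula (\ref{eq:formula-for-bar-bracket}): a sum over $(p,q)$-shuffles $\varphi$ and right-then-up corners $i$, with the shuffle sign $\sigma(\varphi)$ and a local sign $\sigma'(\varphi,i)$ which simplifies considerably at $n = 2$ since the bar-bracket has total degree zero. The main obstacle is the third paragraph above: carefully reconciling Browder's sign conventions for $\phi_2$ with those implicit in the Sugawara-Clark homotopy $M_1$ and in the Eilenberg-Zilber shuffle, so that the predicted formula emerges with the correct overall sign. The remaining Poisson Hopf axioms then follow from \propref{bracket-2-fold-filtration-preserving} combined with the established identification on $E^1$ and the corresponding axioms in \thmref{poisson-hopf-alg-bar-construction}.
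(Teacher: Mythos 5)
The overall structure of your proposal mirrors the paper's proof: reduce modulo filtration to shuffle-with-one-$h_1$, kill same-side pairs by degeneracy, and identify the surviving cross-pairs with arcs on $S^1$. But there is a genuine gap in the crux step (third paragraph). You claim that
\[
h_1(x\otimes 1\mid 1\otimes y)-(-1)^{|x||y|}\,h_1(1\otimes y\mid x\otimes 1)
\]
represents the Browder bracket, and that these two chains ``glue along their common boundary values'' to give the $S^1$-parametrized map. They do not. Both of these are cross-pairs coming from $\mathbf{x}\mathbf{y}$: the first is a right-then-up corner, the second an up-then-right corner, and both types occur inside the single shuffle product $\mathbf{x}\mathbf{y}$. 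Tracing the Sugawara homotopy $M_1$: the first arc runs from the pointwise product $M_0(x,y)$ (at $t=0$; note you have $t=0$ and $t=1$ swapped, a minor slip) to the outer concatenation $x\ast y$ (at $t=1$), while the second runs from $M_0(x,y)$ to $y\ast x$. Together they form an open path $x\ast y\to M_0(x,y)\to y\ast x$, not a loop, so the sum is not a cycle in $H_*(\Omega^2 X)$.

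What you are missing are the two cross-terms coming from $\mathbf{y}\mathbf{x}$, namely $h_1(y\otimes 1\mid 1\otimes x)$ and $h_1(1\otimes x\mid y\otimes 1)$; you misattribute the up-then-right corner $h_1(1\otimes y\mid x\otimes 1)$ as the contribution ``coming from $\mathbf{y}\mathbf{x}$.'' The two $\mathbf{y}\mathbf{x}$ terms supply the return arc $y\ast x\to M_0(y,x)\to x\ast y$, and only with all four pieces (appropriately signed) do you get a closed 1-chain $\gamma=i_{\mathbf{AB}}-i_{\mathbf{CB}}+i_{\mathbf{CD}}-i_{\mathbf{AD}}$ in $C_1(S^1)$ representing the generator, so that the commutator produces $\phi_*(x\otimes\gamma\otimes y)=[x,y]$. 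As written your argument cannot close up the circle, which is precisely the step the paper's proof is built around (its Figure 4.2 with the four arcs $\mathbf{AB},\mathbf{AD},\mathbf{CB},\mathbf{CD}$). The rest of your outline---filtration bookkeeping, degeneracy of same-side $h_1$'s, and deriving the global formula from the local identification---is sound and agrees with the paper.
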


\begin{proof}
Let $\mathbf{x}=[x_{1}|\cdots|x_{p}],\mathbf{y}=[y_{1}|\cdots|y_{q}]\in E_{*,*}^{0}$.
In the spectral sequence, only the portion of the bracket lying in
filtration $F_{m+n-1}$ is visible. Consider the terms contributed
by $\mathbf{xy}$ in the commutator\textemdash by (\ref{eq:generalbarproduct}),
these are shuffles with one $h_{1}$ operation thrown in. They have
one of four forms:
\begin{enumerate}
\item $[\cdots|h_{1}((x\otimes1)\otimes(x'\otimes1))|\cdots]$
\item $[\cdots|h_{1}((x\otimes1)\otimes(1\otimes y))|\cdots]$
\item $[\cdots|h_{1}((1\otimes y)\otimes(x\otimes1))|\cdots]$
\item $[\cdots|h_{1}((1\otimes y)\otimes(1\otimes y'))|\cdots]$
\end{enumerate}
where $x,x'$ are arbitrary $x_{i}$ and likewise for $y,y'$. However,
$h_{1}((x\otimes1)\otimes(x'\otimes1))$ and $h_{1}((1\otimes y)\otimes(1\otimes y'))$
are degenerate, and thus terms of the first and fourth types vanish.
\begin{figure}
\includegraphics[width=7cm]{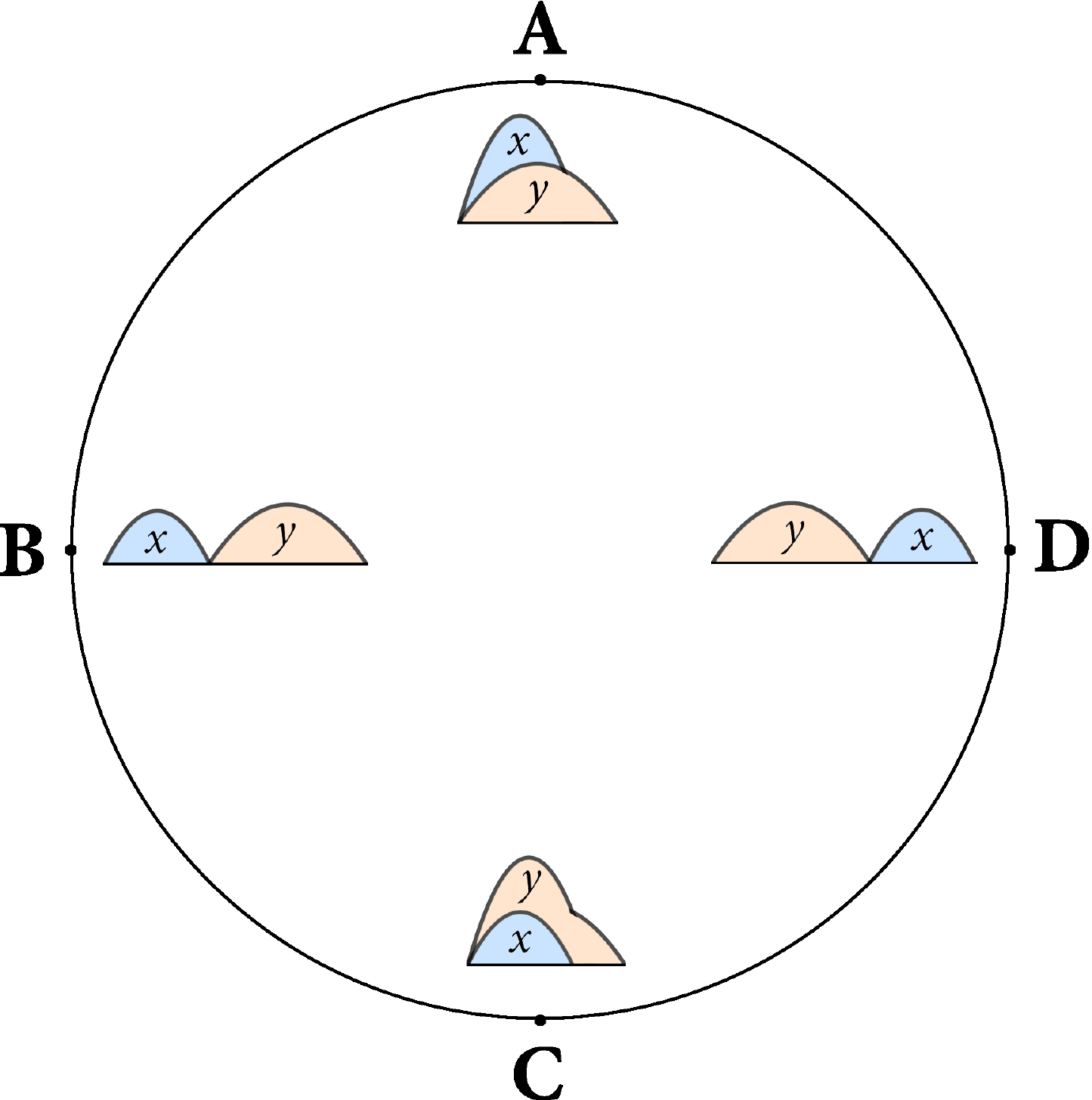}\caption{\label{fig:phi-for-degree-2}The map $\phi\colon\Omega^{2}X\times S^{1}\times\Omega^{2}X\rightarrow\Omega^{2}X$,
illustrated for $(x,t,y)$ as $t\in S^{1}$ varies. $x$ and $y$
are visualized as ``mounds'' of points in $X$, where their outlines
are mapped to the basepoint. The outer concatenation on $\Omega^{2}X$
is depicted horizontally and the inner (pointwise) concatenation is
depicted vertically.}
\end{figure}

Fix $x\in\{x_{i}\}$ and $y\in\{y_{i}\}$, as well as the shuffle
hidden by the ``$\cdots$'' in the bar expression, and consider
the term of the second form $[\cdots|h_{1}((x\otimes1)\otimes(1\otimes y))|\cdots]$.
Without loss of generality, assume that this shuffle has positive
sign in $\mathbf{xy}$. In the following, we refer to \figref{phi-for-degree-2}.
Let $\mathrm{in}_{\mathbf{AB}}\colon I\rightarrow S^{1}$ cover the
quarter-circle sending $0\mapsto\mathbf{A}$ and $1\mapsto\mathbf{B}$.
We then have the commutative diagram
\[
\begin{tikzcd}
& \Omega^2 X \times * \times I \times * \times \Omega^2 X \arrow[dl, hook'] \arrow[dd] \arrow[dr, hook, swap, "\mathrm{in}_\mathbf{AB}"]\\
\Omega^2 X \times \Omega^2 X \times I \times \Omega^2 X \times \Omega^2 X \arrow[dr, "M_1"] && \Omega^2 X \times S^1 \times \Omega^2 X \arrow[dl, swap, "\phi"]\\
& \Omega^2 X
\end{tikzcd}
\]
If we let $i_{\mathbf{AB}}\in C_{1}(S^{1})$ denote the 1-chain given
by $\mathrm{in}_{\mathbf{AB}}$, it follows that
\[
\phi_{*}(x\otimes i_{\mathbf{AB}}\otimes y)=h_{1}((x\otimes1)\otimes(1\otimes y)).
\]
Now consider $[\cdots|h_{1}((1\otimes y)\otimes(x\otimes1))|\cdots]$,
which has sign $(-1)^{(\left|x\right|+1)(\left|y\right|+1)}$ in $\mathbf{xy}$.
One similarly finds that
\begin{align*}
h_{1}((1\otimes y)\otimes(x\otimes1)) & =(-1)^{\left|x\right|\left|y\right|+\left|x\right|+\left|y\right|}\phi_{*}(x\otimes i_{\mathbf{AD}}\otimes y)\\
 & =(-1)^{(\left|x\right|+1)(\left|y\right|+1)}\phi_{*}(x\otimes-i_{\mathbf{AD}}\otimes y)
\end{align*}
where the sign $(-1)^{\left|x\right|\left|y\right|+\left|x\right|+\left|y\right|}$
comes from interchanging $x$ and $y$ across a 1-chain. Hence this
contributes the term $[\cdots|\phi_{*}(x\otimes-i_{\mathbf{AD}}\otimes y)|\cdots]$.

Likewise, we consider the terms that arise in $-(-1)^{\left|\mathbf{x}\right|\left|\mathbf{y}\right|}\mathbf{xy}$.
One is $[\cdots|h_{1}((1\otimes x)\otimes(y\otimes1))|\cdots]$, which
has sign $(-1)^{\left|\mathbf{x}\right|\left|\mathbf{y}\right|}$
in $\mathbf{yx}$ and thus negative sign in the commutator. But
\[
h_{1}((1\otimes x)\otimes(y\otimes1))=\phi_{*}(x\otimes i_{\mathbf{CB}}\otimes y)
\]
so it ultimately contributes the term $[\cdots|\phi_{*}(x\otimes-i_{\mathbf{CB}}\otimes y)|\cdots]$.
Lastly there is the term $[\cdots|h_{1}((y\otimes1)\otimes(1\otimes x))|\cdots]$
with sign $-(-1)^{(\left|x\right|+1)(\left|y\right|+1)}$, and
\[
h_{1}((y\otimes1)\otimes(1\otimes x))=(-1)^{\left|x\right|\left|y\right|+\left|x\right|+\left|y\right|}\phi_{*}(x\otimes i_{\mathbf{CD}}\otimes y),
\]
so it results in the term $[\cdots|\phi_{*}(x\otimes i_{\mathbf{CD}}\otimes y)|\cdots]$.

The sum $\gamma=i_{\mathbf{AB}}-i_{\mathbf{CB}}+i_{\mathbf{CD}}-i_{\mathbf{AD}}$
represents a generator of $H_{1}(S^{1})$, and altogether the set
of four terms considered above in the commutator combine to give $[\cdots|\phi_{*}(x\otimes\gamma\otimes y)|\cdots]$.
When we pass to the $E_{*,*}^{1}$ page, this becomes $[\cdots|[x,y]|\cdots]$
where $[x,y]$ is the bracket on $H_{*}(\Omega^{2}X)$ as defined
in \S\ref{subsec:Browder-bracket}.
\end{proof}

\subsection{The case $n\geq 3$}

We now turn to the case $n\geq 3$, with the aim of relating the degree $n-1$ bracket on $H_{*}(\Omega^n X)$ to the degree
$n-2$ bracket on $H_{*}(\Omega^{n-1}X)$ through the bar spectral sequence. Our treatment
will not be as explicit as in the $n=2$ case.

We proceed similarly as in \S\ref{sec:bracket-2-fold}, except now
we begin with $\phi_{n-1}\colon \Omega^{n-1} X\times S^{n-2}\times \Omega^{n-1} X\rightarrow \Omega^{n-1} X$.
Looping gives a map
\[
M_{0}\colon \Omega^{n} X\times\Omega S^{n-2}\times \Omega^{n} X\xrightarrow{\zeta}\Omega(\Omega^{n-1} X\times S^{n-2}\times \Omega^{n-1} X)\rightarrow \Omega^{n} X
\]
which operates ``pointwise'' on loops, and there are homotopies
\[
M_{n}\colon\underbrace{(\Omega^n X\times\Omega S^{n-2}\times\Omega^n  X)\times I\times\cdots\times I\times(\Omega^n X\times\Omega S^{n-2}\times \Omega^n X)}_{n\text{ copies of }I}\rightarrow \Omega^n X
\]
which are used to deloop $M_{0}$. At the chain level, we take the identity map $(\Delta^1 \to I) \in C_1(I)$ in each occurrence of $C_*(I)$ and obtain a family of chain maps
\[
h_{n}\colon(C_{*}(\Omega^n X)\otimes\Omega S^{n-2}\otimes C_{*}(\Omega^n X))^{\otimes(n+1)}\rightarrow C_{*}(\Omega^n X)
\]
where $h_n$ shifts degree by $n$.

In turn, we obtain a map
\begin{align*}
\B_{*,*}(C_{*}(\Omega^n X))\otimes\B_{*,*}(C_{*}(\Omega S^{n-2}))\otimes\B_{*,*}(C_{*}(\Omega^n X)) & \xrightarrow{\mathrm{EZ}}\B_{*,*}(C_{*}(\Omega^n X)\otimes C_{*}(\Omega S^{n-2})\otimes C_{*}(\Omega^n X))\\
 & \rightarrow\tot\B_{*,*}(C_{*}(\Omega^n X))
\end{align*}
where the first map is the shuffle product on three terms and the
second is as described in (\ref{eq:generalbarproduct}). The bracket
on the total bar complex is given by fixing $[\xi]\in\B{}_{*,*}(C_{*}(\Omega S^{n-2}))$
in the above, where $\xi\in C_{n-3}(\Omega S^{n-2})$ represents the
generator in homology. If $\beta\in C_{n-3}(S^{n-3})$ represents
a generator of $H_{n-3}(S^{n-3})$ and $\eta\colon S^{n-3}\rightarrow\Omega S^{n-2}$
is the unit of the loop-suspension adjunction (see \figref{loop-suspension-unit}),
then we can take $\xi=\eta_{*}(\beta)$. Note that this only makes sense for $n\geq 3$, which is why we considered $n=2$ separately.
\begin{figure}
	\includegraphics[width=6cm]{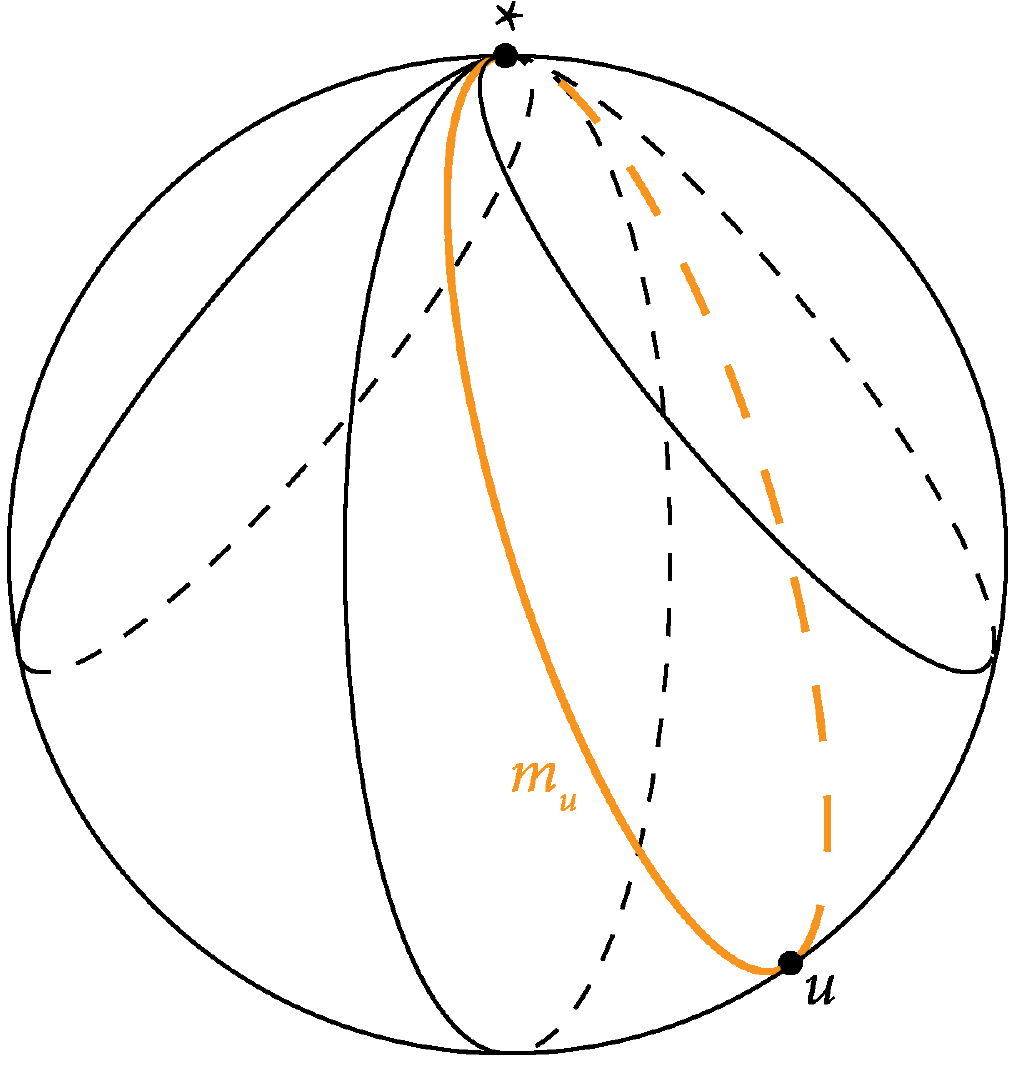}
	
	\caption{\label{fig:loop-suspension-unit}The map $\eta\colon S^{n-3}\rightarrow\Omega S^{n-2}$
		sending $u$ to the loop $m_{u}$, depicted for $n=4$.}
	
\end{figure}
\begin{prop}
Let $n\geq2$. The bar spectral sequence
\[
E_{*,*}^{2}\cong\Tor_{*,*}^{H_{*}(\Omega^{n}X)}(k,k)\Rightarrow\Tor_{*}^{C_{*}(\Omega^{n}X)}(k,k)
\]
is a spectral sequence of Poisson Hopf algebras, with bracket of bidegree $(-1,n-1)$ satisfying
\begin{equation}
d^r[x,y] = [d^rx,y] + (-1)^{n+|x|}[x,d^r y].\label{eq:n-bracket-Leibniz-ss}
\end{equation}
\end{prop}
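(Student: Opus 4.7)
The plan is to follow the proof of \propref{bracket-2-fold-filtration-preserving} closely, using in place of the commutator the chain-level bracket
\[
	[\mathbf{x}, \mathbf{y}] := \mu(\mathbf{x} \otimes [\xi] \otimes \mathbf{y})
\]
on $J := \tot\B_{*,*}(C_*(\Omega^n X))$, where $\mu$ is the three-term composite constructed above. Because $\xi$ is a cycle and $\mu$ is built from chain maps (the three-way shuffle product and the map of (\ref{eq:generalbarproduct}) extended to three factors), this bracket is a chain map of total degree $n-2$, and the identity $D\circ\mu = \mu\circ D$ gives the Leibniz rule
\[
	D[\mathbf{x}, \mathbf{y}] = [D\mathbf{x}, \mathbf{y}] + (-1)^{|\mathbf{x}|+n}[\mathbf{x}, D\mathbf{y}].
\]

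The main technical claim is that this bracket shifts filtration by $-1$: it maps $F_p J \otimes F_s J \to F_{p+s-1} J$. The three-way shuffle of $\mathbf{x} \in \B_{p,*}$, $[\xi] \in \B_{1, n-3}$, and $\mathbf{y} \in \B_{s,*}$ lies in $\B_{p+s+1,*}$, so $\mu$ a priori maps into $F_{p+s+1}$. The observation that saves us is that in the normalized bar complex, the top filtrations vanish. The slot $h_0(1 \otimes \xi \otimes 1)$ is the constant loop at the basepoint of $\Omega^n X$, since $\phi_{n-1}$ is trivial on any triple whose $\Omega^{n-1}X$-slots are basepoints; more generally, any $h_k$-slot whose inputs lie entirely on one side---involving only $\mathbf{x}$- and $\xi$-factors, or only $\mathbf{y}$- and $\xi$-factors---reduces to a family constant in the $\xi$- and interpolation-directions, and is therefore degenerate. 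The only terms which survive normalization have $\xi$ absorbed into an $h_k$ with $k\geq 2$ that involves factors from both $\mathbf{x}$ and $\mathbf{y}$, forcing the external degree to drop by at least $2$ from $p+s+1$ and landing us in $F_{p+s-1}$.

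Given the filtration claim and the Leibniz rule, the method of \cite[Theorem~2.14]{sseq user's guide} applied in \propref{bracket-2-fold-filtration-preserving} goes through verbatim: the subcomplexes $Z^r_{p,q}$ and $B^r_{p,q}$ are preserved by $[-,-]$, and we obtain an induced bracket on each page of the form $E^r_{p,q} \otimes E^r_{s,t} \to E^r_{p+s-1, q+t+n-1}$ satisfying (\ref{eq:n-bracket-Leibniz-ss}). The Poisson Hopf axioms on the spectral sequence will follow as in \propref{bracket-2-fold-filtration-preserving}: the subsequent analogue of \thmref{2-fold-main} identifies the induced bracket on $E^1_{*,*}$ with the bar-construction bracket of \thmref{poisson-hopf-alg-bar-construction}, which together with Cohen's Poisson Hopf structure on $H_*(\Omega^{n-1}X) \cong E^\infty_{*,*}$ supplies the axioms on every page.

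The main obstacle is making the filtration argument rigorous---in particular, verifying that the $h_k$-terms whose inputs omit either all $\mathbf{x}$- or all $\mathbf{y}$-contributions are genuinely degenerate after normalization. This amounts to a careful inductive inspection of Clark's homotopies $M_k$ when one of the two $\Omega^n X$-slots is always a basepoint, using in an essential way the strict unitality $\phi_{n-1}(a, u, *) = a = \phi_{n-1}(*, u, a)$ coming from the inductive definition of Browder's bracket map.
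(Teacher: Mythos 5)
Your proposal follows the same route as the paper: define the bracket on $\tot\B_{*,*}(C_*(\Omega^n X))$ as $\mu(\mathbf{x}\otimes[\xi]\otimes\mathbf{y})$, obtain the Leibniz rule from the fact that $\mu$ is a chain map and $[\xi]$ a cycle, prove the filtration shift by observing that any $h_k$-slot containing $\xi$ but missing one of the two $\Omega^n X$-inputs is degenerate (the paper checks this for the $h_0$- and $h_1$-slots that govern $F_{p+s+1}$ and $F_{p+s}$; your blanket statement about all $h_k$ on one side is the same principle), and then run the $Z^r/B^r$ argument of \cite[Theorem~2.14]{sseq user's guide} as in \propref{bracket-2-fold-filtration-preserving}. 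You have correctly located the one real piece of work---the degeneracy claim---and your diagnosis via strict unitality of $\phi_{n-1}$ is the right mechanism (the only caveat is that the relevant map is constant in the $\xi$-direction, not the interpolation directions, and this requires the representing sphere $\eta\colon S^{n-3}\to\Omega S^{n-2}$ to be chosen so that all loops $m_u$ have a common length).
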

\begin{proof}
We have already proved it for $n=2$ so let $n\geq3$. A priori, the
degree $n-2$ bracket on $J=\tot\B_{*}(C_{*}(\Omega^{n}X))$ maps $[-,-]\colon F_{p}J\otimes F_{q}J\rightarrow F_{p+q+1}J$.
The task is to show that the bracket actually lands in $F_{p+q-1}J$.

Each shuffle of $[x_{1}|\cdots|x_{p}]$, $[\xi]$, and $[y_{1}|\cdots|y_{q}]$
contains terms $x_{i}\otimes1\otimes1$, $1\otimes1\otimes y_{j}$,
and one instance of $1\otimes\xi\otimes1$. However, note that $h_{0}(1\otimes\xi\otimes1)$
gives a degenerate chain, thus vanishing in $C_{*}(X)$. Hence the
part of the bracket lying in $F_{p+q+1}$ vanishes.

The part of the bracket lying in $F_{p+q}$ is given by inserting
one $h_{1}$ into each shuffle. If the $h_{1}$ does not touch $1\otimes\xi\otimes1$,
then the result vanishes by the preceding. But even if the $h_{1}$
is inserted as $h_{1}((x\otimes1\otimes1)\otimes(1\otimes\xi\otimes1))$
for example, the result is \emph{still} degenerate, since the role
of $\Omega S^{n-2}$ is to control the multiplication on elements
of $\Omega^n X$. Thus the part of the bracket residing in $F_{p+q}$ also
vanishes.

Recall that this bracket comes from a chain map
\[
	J \otimes \tot\B_*(C_*(\Omega S^{n-2})) \otimes J \to J
\]
which sends $x\otimes [\xi] \otimes y$ to $[x,y]$ where $|[\xi]|=n-2$; the fact that this is a chain map implies (\ref{eq:n-bracket-Leibniz-ss}). The Poisson Hopf algebra axioms will follow from \ref{thm:generalmain} and \ref{thm:poisson-hopf-alg-bar-construction}. The remainder proceeds as in the proof of \propref{bracket-2-fold-filtration-preserving}.
\end{proof}
The terms of the bracket landing in $F_{p+q-1}$ are given by shuffles
with either one $h_{2}$ inserted or two $h_{1}$s inserted, but by
the same reasoning as in the above proof, shuffles with two $h_{1}$s
inserted will vanish. As such, an understanding of the homotopy $M_{2}$
is necessary. It is similar in spirit to $M_{1}$. In \figref{M2-visual},
we offer a visual description of $M_{2}$ which the reader should
compare to \figref{M1-visual}. An explicit definition can be constructed
from Proposition 1.6 in \cite{homocomm}.

\begin{figure}
\includegraphics[width=9cm]{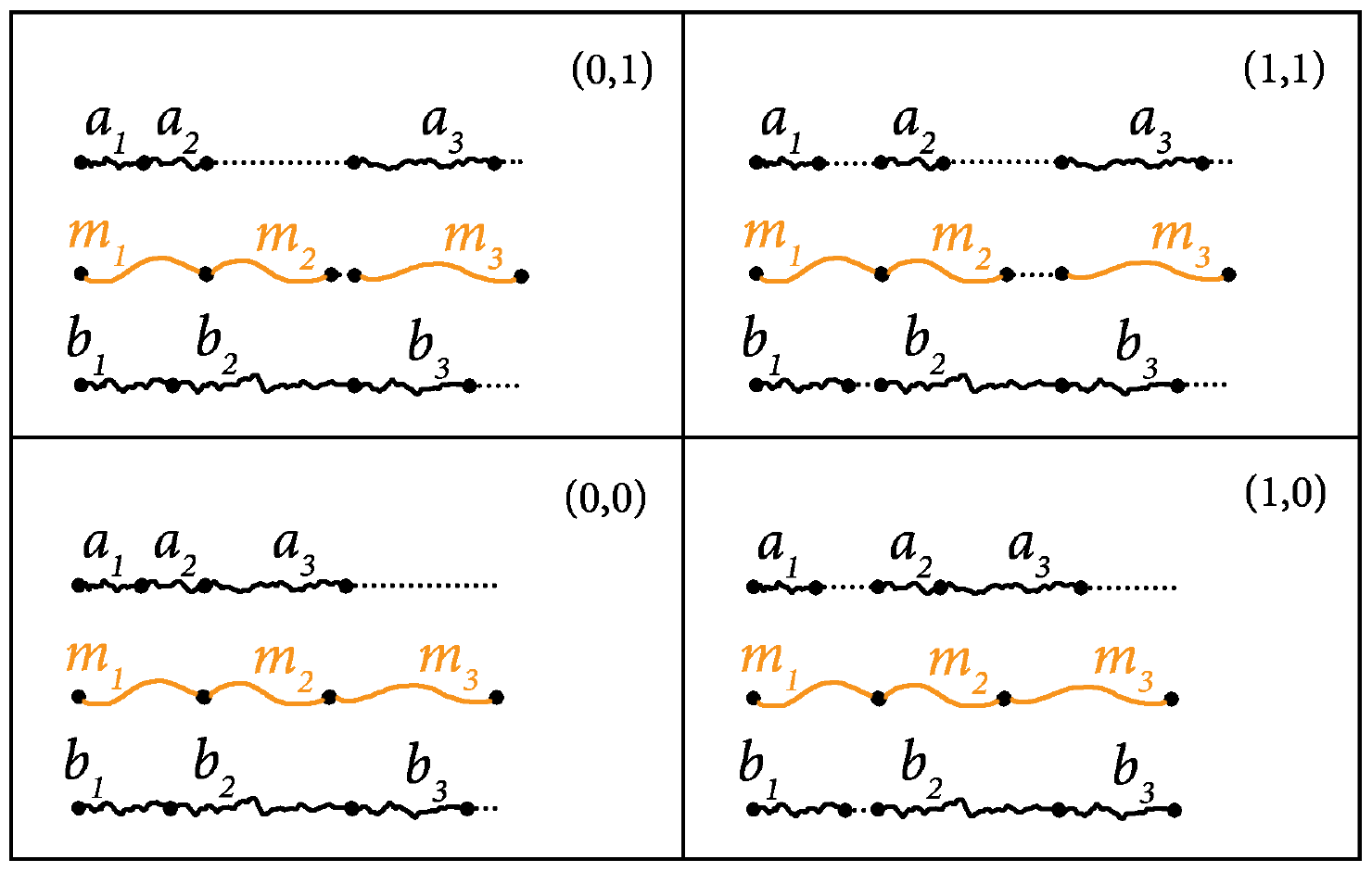}

\caption{\label{fig:M2-visual}The homotopy $M_{2}$ on $((a_{1},m_{1},b_{1}),t_{1},(a_{2},m_{2},b_{2}),t_{2},(a_{3},m_{3},b_{3}))$
as $(t_{1},t_{2})\in I^{2}$ varies.}
\end{figure}

Using $\eta\colon S^{n-3} \to \Omega S^{n-2}$, define
$\hat{M}_{n}$ to be the composite
\begin{align*}
\hat{M}_{n}\colon & (\Omega^n X\times S^{n-3}\times \Omega^n X)\times I\times\cdots\times I\times(\Omega^n X\times S^{n-3}\times \Omega^n X)\\
& \rightarrow(\Omega^n X\times\Omega S^{n-2}\times \Omega^n X)\times I\times\cdots\times I\times(\Omega^n X\times\Omega S^{n-2}\times \Omega^n X)\rightarrow \Omega^n X
\end{align*}
which at the chain level induces the degree $n$ map
\begin{equation}
\hat{h}_{n}\colon(C_{*}(\Omega^n X)\otimes C_{*}(S^{n-3})\otimes C_{*}(\Omega^n X))^{\otimes(n+1)}\rightarrow C_{*}(\Omega^n X).\label{eq:hn-hat}
\end{equation}

Now for our main result, we show that the bracket in the spectral sequence is as we described in \S\ref{sec:combinatorics-bracket-bar-construction}.
\begin{thm}
\label{thm:generalmain}Let $n\geq 2$. For $x,y\in H_{*}(\Omega^n X)$, let $[x,y]$ denote their degree
$n-1$ bracket. In the bar spectral
sequence converging to $H_{*}(\Omega^{n-1} X)$, the $E_{*,*}^{1}$ page is the bar construction on $H_*(\Omega^n X)$, where the bracket is given by \thmref{poisson-hopf-alg-bar-construction}.
\end{thm}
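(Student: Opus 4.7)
The plan is to mirror the argument of \thmref{2-fold-main} but now using the three-fold shuffle with $[\xi]$ fixed in the middle. Let $\mathbf{x}=[x_1|\cdots|x_p]$ and $\mathbf{y}=[y_1|\cdots|y_q]$ represent classes in $E^1_{*,*}$. Their bracket in $J=\tot\B_*(C_*(\Omega^n X))$ is obtained by shuffling $\mathbf{x}$, $[\xi]$, and $\mathbf{y}$ and then summing over all ways to apply $h_k$ operations to runs of consecutive entries of the resulting bar word. The preceding proposition places the bracket in $F_{p+q-1}J$, so the $E^1$-level bracket---which is its image in $F_{p+q-1}J/F_{p+q-2}J$---is built entirely from the shuffles with exactly one $h_2$ insertion, since shuffles with two $h_1$ insertions vanish by the same degeneracy argument used in the preceding proposition.

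Next I would determine which single-$h_2$ insertions give non-degenerate contributions. Each entry in the shuffled word has the form $(x_i\otimes 1\otimes 1)$, $(1\otimes\xi\otimes 1)$, or $(1\otimes 1\otimes y_j)$, and by the same principle as in the preceding proposition, the $\Omega S^{n-2}$-factor exists only to control the multiplication on $\Omega^n X$ elements. Consequently $h_2$ yields a non-degenerate chain only when its three inputs are $(x_i\otimes 1\otimes 1)$, $(1\otimes\xi\otimes 1)$, and $(1\otimes 1\otimes y_j)$---in that order. Since $[\xi]$ appears just once in the shuffle, this pins $[\xi]$ between an $x_i$ and a $y_j$, which are precisely the ``lower-right corners'' in the $(p,q)$-shuffle walks that index the sum in formula (\ref{eq:formula-for-bar-bracket}).

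Finally I would identify the surviving chain $h_2\bigl((x_i\otimes 1\otimes 1)\otimes(1\otimes\xi\otimes 1)\otimes(1\otimes 1\otimes y_j)\bigr)$ with $\phi_*(x_i\otimes\gamma\otimes y_j)$, where $\gamma$ represents a generator of $H_{n-1}(S^{n-1})$; passing to $E^1$ this becomes the degree $n-1$ Browder bracket $[x_i,y_j]$ on $H_*(\Omega^n X)$, as required. Using $\xi=\eta_*(\beta)$ together with the definition of $\hat{M}_2$, the $S^{n-3}$-parameters of $\beta$ and the two $I$-parameters of $M_2$ should sweep out a chain matching the inductive description of $\phi_n$ via the quotient $S^{n-1}\cong(S^{n-2}\times I)/\sim$. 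The hard part is exactly this last identification: the explicit form of $M_2$ from Proposition~1.6 of \cite{homocomm} must be matched, on chains and with signs, against the inductive construction of $\phi_n$ reviewed in \S\ref{subsec:Browder-bracket}. Once the chain-level identification is in hand, the signs $\sigma'(\varphi,i)$ appearing in \thmref{poisson-hopf-alg-bar-construction} emerge from commuting $[\xi]$ past the surrounding $x_i$- and $y_j$-entries in the shuffle.
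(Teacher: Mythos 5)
Your outline is structurally on the right track through the first paragraph: passing to $E^1$ isolates single-$h_2$ insertions, and the $h_2$ must take $1\otimes\xi\otimes 1$ as one of its arguments (else a degenerate chain), and it cannot take two arguments of the same type ($x,x'$ or $y,y'$). But at that point you conclude that the only surviving configuration is $h_2\bigl((x\otimes 1\otimes 1)\otimes(1\otimes\xi\otimes 1)\otimes(1\otimes 1\otimes y)\bigr)$, i.e.\ that $\xi$ is pinned \emph{between} an $x_i$ and a $y_j$. That is wrong: all six orderings of one $x$-slot, one $\xi$-slot, and one $y$-slot give non-degenerate contributions. Compare the $n=2$ case in the paper, where four distinct $h_1$ configurations survive (types (2) and (3), each with two orderings) and all four are needed to assemble the cycle $\gamma = i_{\mathbf{AB}}-i_{\mathbf{CB}}+i_{\mathbf{CD}}-i_{\mathbf{AD}}$ generating $H_1(S^1)$. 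For $n\geq 3$ the paper lists six $\hat{h}_2$ configurations, each induced by a restriction $F_i$ of $\hat{M}_2$.

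This matters for the last step, which your proposal leaves essentially open. A single $\hat{h}_2$ term is parametrized by $S^{n-3}\times I^2$, which is the wrong homotopy type to produce a class in $H_{n-1}(S^{n-1})$; you cannot identify one such term with $\phi_*(x\otimes\gamma\otimes y)$. The paper instead stitches the six restrictions $F_1,\dots,F_6$ into a single map $F\colon \Omega^n X\times S^{n-3}\times I^2\times\Omega^n X\to\Omega^n X$, then homotopes away extraneous constant paths to get $\tilde F$ which factors through $\Omega^n X\times (S^{n-3}\wedge S^2)\times\Omega^n X\cong \Omega^n X\times S^{n-1}\times\Omega^n X$, and finally applies an explicit ``untwisting'' homotopy to show $\tilde F\simeq\phi$. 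The gluing of all six pieces into $S^{n-1}$ and the subsequent untwisting are the substantive content of the proof; by discarding five of the six contributions you have removed the data needed to make that identification.
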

\begin{proof}
The sign $(-1)^{\sigma'(\varphi,i)}$ appears only when $n$ is odd,
so when $n=2$ this reduces to \thmref{2-fold-main}.

Let $n\geq3$. First we consider $E_{*,*}^{0}$. The $h_{2}$ inserted
into a shuffle must take $1\otimes\xi\otimes1$ as one of its arguments,
or the shuffle will vanish. If the other two arguments are both of
the form $x\otimes1\otimes1$, or both of the form $1\otimes1\otimes y$,
then the result is again degenerate and vanishes. In light of these
considersations, if we fix the surrounding shuffle denoted by ``$\cdots$''
below, the bracket appears in sets of 6 pieces (with $\beta$ and
$\hat{h}_{2}$ as in the discussion preceding (\ref{eq:hn-hat})):
\begin{enumerate}
\item $[\cdots|\hat{h}_{2}((1\otimes\beta\otimes1)\otimes(x\otimes1\otimes1)\otimes(1\otimes1\otimes y))|\cdots]$
\item $[\cdots|\hat{h}_{2}((x\otimes1\otimes1)\otimes(1\otimes\beta\otimes1)\otimes(1\otimes1\otimes y))|\cdots]$
\item $[\cdots|\hat{h}_{2}((x\otimes1\otimes1)\otimes(1\otimes1\otimes y)\otimes(1\otimes\beta\otimes1))|\cdots]$
\item $[\cdots|\hat{h}_{2}((1\otimes\beta\otimes1)\otimes(y\otimes1\otimes1)\otimes(1\otimes1\otimes x))|\cdots]$
\item $[\cdots|\hat{h}_{2}((y\otimes1\otimes1)\otimes(1\otimes\beta\otimes1)\otimes(1\otimes1\otimes x))|\cdots]$
\item $[\cdots|\hat{h}_{2}((y\otimes1\otimes1)\otimes(1\otimes1\otimes x)\otimes(1\otimes\beta\otimes1))|\cdots]$
\end{enumerate}
Each of the above $\hat{h}_{2}$ expressions can be induced from a
restriction of $\hat{M}_{2}$. For example, terms of the second form
are induced by the restriction of $\hat{M}_{2}$ to
\[
F_{2}\colon(\Omega^n X\times*\times*)\times I\times(*\times S^{n-3}\times*)\times I\times(*\times*\times \Omega^n X)\rightarrow \Omega^n X.
\]
Define the restrictions $F_{i}$ for $i=1,\ldots,6$ analogously.
These can be stitched together to be parts of a larger map $F\colon \Omega^n X\times S^{n-3}\times I^{2}\times \Omega^n X\rightarrow \Omega^n X$,
as illustrated in \figref{total-F}. The table beneath it shows how
$F$ acts at specific points. The loop $m_{u}\in\Omega S^{n-2}$ controls
the pointwise multiplication of loops $a,b\in \Omega^n X=\Omega (\Omega^{n-1} X)$. The dotted
segments denote constant paths at the basepoint.

\begin{figure}[h]
\includegraphics[width=14cm]{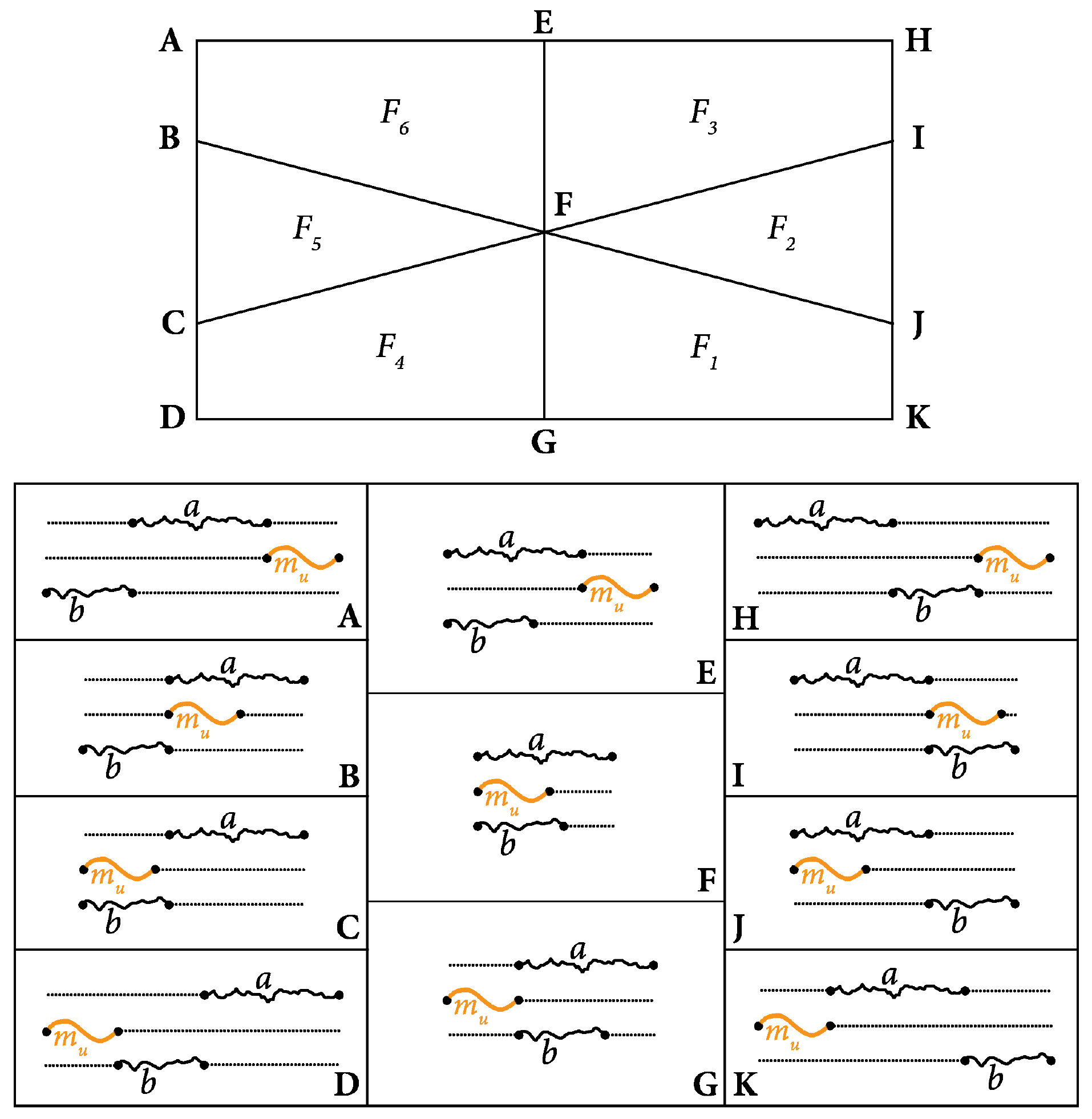}

\caption{\label{fig:total-F}The pieces $F_{i}$ assembled into a single map
$F\colon \Omega^n X\times S^{n-3}\times I^{2}\times \Omega^n X\rightarrow \Omega^n X$, with $a,b\in \Omega^n X$ (thought of as loops in $\Omega^{n-1}X$) and
$u\in S^{n-3}$. The map $u\protect\mapsto m_{u}$ is as in \figref{loop-suspension-unit}.}

\end{figure}

Note that where $m_{u}$ does not interact with $a$ or $b$, it produces
constant paths at the basepoint. For instance, evaluating $F$ at
the point labeled $\mathbf{A}$ gives: $b$, followed by $a$, followed
by a constant path (of length equal to that of $m_{u}$). By removing
all such extraneous constant paths, we obtain another map $\tilde{F}\colon \Omega^n  X\times S^{n-3}\times I^{2}\times \Omega^n  X\rightarrow \Omega^n  X$
which is homotopic to $F$. However, $\tilde{F}$ on the segment $\overline{\mathbf{AH}}$
is the same as on $\overline{\mathbf{DK}}$, and $\tilde{F}$ is constant
on $\overline{\mathbf{AD}}$ as well as on $\overline{\mathbf{HK}}$.
Making these identifications, the depicted rectangle $\mathbf{ADKH}$
becomes $S^{2}$. Moreover, the value of $\tilde{F}$ on $\overline{\mathbf{AH}}=\overline{\mathbf{DK}}$
is independent of $m_{u}$, and when $u=*\in S^{n-3}$, the value
of $\tilde{F}$ does not depend on vertical position in the rectangle
$\mathbf{ADKH}$ (i.e. the picture in \figref{total-F} can be flattened
to just the line $\overline{\mathbf{AH}}=\overline{\mathbf{DK}}$
for $u=*$). Thus $\tilde{F}$ factors through $\Omega^n  X\times S^{n-3}\wedge S^{2}\times \Omega^n  X\cong \Omega^n  X\times S^{n-1}\times \Omega^n  X$.

Now we have a map $\tilde{F}\colon \Omega^n  X\times S^{n-1}\times \Omega^n  X\rightarrow \Omega^n  X$,
and in homology (the $E_{*,*}^{1}$ page) the total of the six terms
listed at the beginning is $[\cdots|\tilde{F}_{*}(x,\gamma,y)|\cdots]$
for a generator $\gamma\in H_{n-1}(S^{n-1})$.

The map $\tilde{F}$ involves an unwanted ``twisting'' of $a$ and
$b$ according to $m_{u}$. This is remedied by the ``tting''
homotopy $H_{t}$ in \figref{untwist-homotopy}. Let
\[
L=\frac{L_{0}+l(m_{u})}{L_{h}+l(m_{u})}L_{h}.
\]
The homotopy multiplies $a$ and $b$ according to the bold path,
which coincides with $m_{u}$ on the interval $[Lt,L_{h}(1-t)+Lt]$
and is extended by constant paths on both sides.

\begin{figure}[h]
\includegraphics[width=10cm]{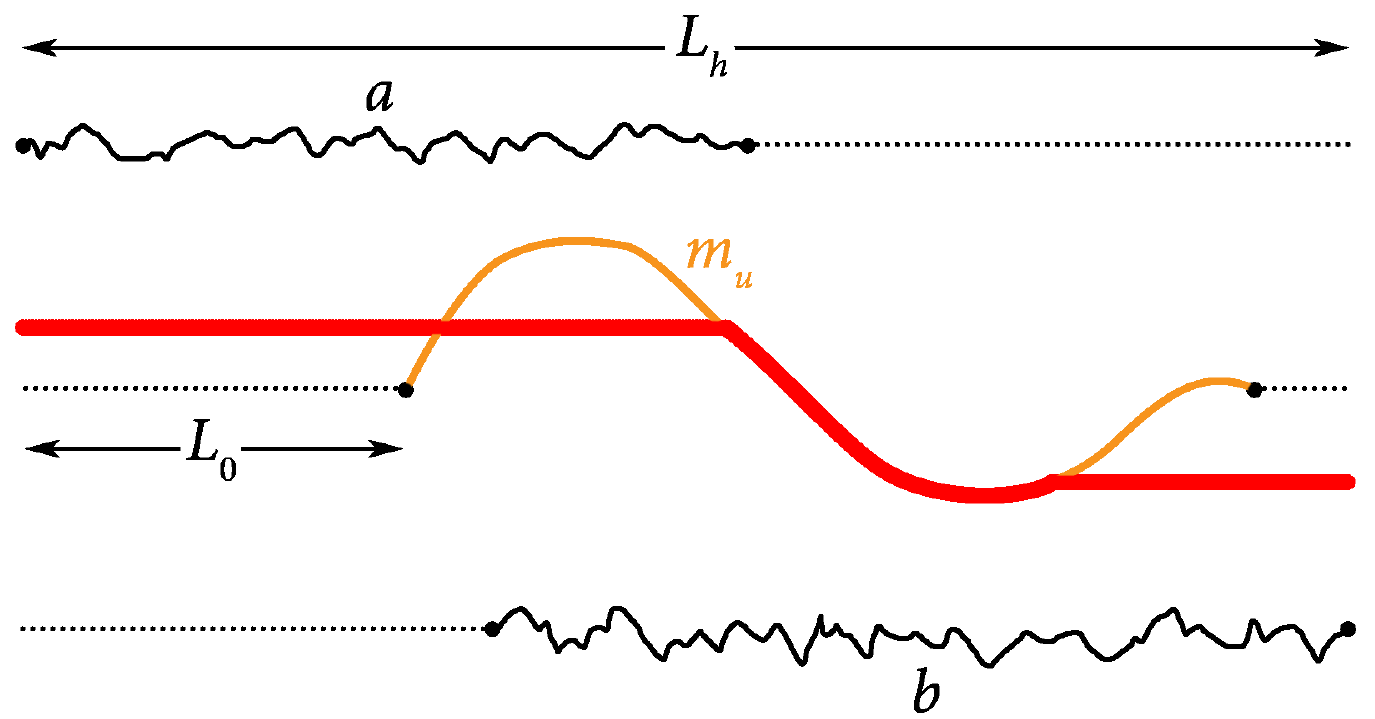}

\caption{\label{fig:untwist-homotopy}The homotopy $H_{t}\colon \Omega^n  X\times S^{n-1}\times \Omega^n  X\rightarrow \Omega^n  X$,
$t\in I$. (Although $L_{0}$ as drawn is positive, it can be in the
interval $[-l(m_{u}),L_{h}]$.) }
\end{figure}

At $t=1$ we obtain the desired $\phi\colon \Omega^n  X\times S^{n-1}\times \Omega^n  X\rightarrow \Omega^n  X$,
and $\tilde{F}_{*}(x,\gamma,y)=\phi_{*}(x,\gamma,y)=[x,y]$.

Lastly, the sign $(-1)^{\sigma'(\varphi,i)}$ as defined in \thmref{poisson-hopf-alg-bar-construction} results from terms $x_{j}\otimes1\otimes1$ being shuffled
after $1\otimes\xi\otimes1$ (which has total degree $n-2=n$ mod
2), and terms $1\otimes1\otimes y_{j}$ being shuffled before it\textemdash a
sign which is not accounted for in the shuffle of $[x_{1},\ldots,x_{p}]$
only with $[y_{1},\ldots,y_{q}]$.
\end{proof}

The bracket in the spectral sequence has a very simple description on $E_{1,*}^{1}$.
\begin{cor}
\label{cor:E1-bracket}If $x,y\in H_{*}(\Omega^n  X)$, then
\begin{equation}
[[x],[y]]=[[x,y]]\label{eq:column1-bracket}
\end{equation}
where $[x],[y]\in E_{1,*}^{1}$, $[[x],[y]]$ denotes the bracket
in the spectral sequence, and $[x,y]$ denotes the bracket in $H_{*}(\Omega^n  X)$.
\end{cor}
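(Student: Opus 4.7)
The plan is to derive this as a direct specialization of \thmref{generalmain}. That theorem identifies the bracket on $E^1_{*,*}$ with the explicit bar-construction bracket of \thmref{poisson-hopf-alg-bar-construction}, so it suffices to evaluate formula (\ref{eq:formula-for-bar-bracket}) on two length-one bars, i.e.\ with $p = q = 1$, $x_1 = x$, and $y_1 = y$, and observe that the sum collapses to a single term.

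First I would enumerate the $(1,1)$-shuffles: there are only two, the identity $\varphi_{\mathrm{id}}$ and the transposition $\varphi_\tau$. The sum in (\ref{eq:formula-for-bar-bracket}) is further restricted to indices $i$ with $\varphi^{-1}(i) \leq p$ and $\varphi^{-1}(i+1) > p$. For $p = q = 1$ this forces $i = 1$ and $\varphi^{-1}(1) = 1$, so $\varphi_\tau$ drops out and only $\varphi_{\mathrm{id}}$ can contribute.

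Next I would check that both signs trivialize for the identity shuffle: $\sigma(\varphi_{\mathrm{id}}) = 0$ since there are no inversions, and both sums defining $\sigma'(\varphi_{\mathrm{id}}, 1)$ are empty, as the single $x$-index $1$ has $\varphi_{\mathrm{id}}(1) = 1 \not> 2$ and $y_1$ sits at position $2 \not< 1$. The lone surviving term is therefore $\big[[x,y]\big]$, which is exactly (\ref{eq:column1-bracket}).

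There is no real obstacle here; once \thmref{generalmain} is in hand, the corollary amounts to a consistency check on the shuffle-sum conventions. The only thing requiring a moment of care is reading off the index conditions on $\sigma'$ correctly for such a short shuffle.
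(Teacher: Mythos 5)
Your proposal is correct and matches the approach the paper intends: the corollary is stated without a written proof precisely because it is the direct specialization of \thmref{generalmain} to $p=q=1$, exactly as you carry out. Your enumeration of the $(1,1)$-shuffles, the observation that the inner-sum index condition forces $i=1$ and $\varphi=\mathrm{id}$, and the check that $\sigma$ and $\sigma'$ both vanish are all accurate, so the single surviving term is indeed $\big[[x,y]\big]$.
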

This corollary implies Theorem 2-1 in \cite{browder bracket}, as
the spectral sequence has an edge homomorphism $E_{1,*}^{1}\rightarrow H_{*}(\Omega^{n-1} X)$
which sends $[x]$ with $x\in H_{*}(\Omega^n  X)$ to the homology suspension
$\sigma x\in H_{*}(\Omega^{n-1} X)$.

\end{document}